\newcommand{\ox}{\bar{x}}
\newcommand{\intset}{{\rm int}}
\newcommand{\cl}{{\rm cl}}
\newcommand{\bd}{{\rm bd}}
\newcommand{\dom}{{\rm dom}}
\newcommand{\gph}{{\rm gph}}
\newcommand{\epi}{{\rm epi}}
\newcommand{\N}{\mathbb{N}}
\newcommand{\R}{\mathbb{R}}
\newcommand{\la}{\langle}
\newcommand{\ra}{\rangle}
\newtheorem{theorem}{Theorem}[section]
\newtheorem{proposition}{Proposition}[section]
\newtheorem{exercise*}{Exercise}
\newtheorem{definition}{Definition}[section]
\newtheorem{remark}{Remark}[section]
\newtheorem{example}{Example}[section]
\title{$\ell_0$-Norm Multiobjective Optimization Models Motivated by Applications to Proton Therapy}
\author[1]{Xiaoda Cong}
\author[1]{Xuanfeng Ding}
\author[2]{Boris Mordukhovich}
\author[2]{Anh Vu Nguyen}
\author[3]{Lewei Zhao}
\affil[1]{\small Proton Therapy Center, Corewell William Beaumont Hospital, Royal Oak, MI 48073}
\affil[2]{\small Department of Mathematics, Wayne State University, Detroit, MI 48202}
\affil[3]{Department of Radiation Oncology, MedStar Georgetown University Hospital, Washington DC, 20007}
\date{}  
\begin{document}
\maketitle
\vspace*{-0.3in}
{\bf Dedicated to Professor Christiane Tammer in honor of her 70th birthday}

\begin{abstract}
\vspace*{-0.2in}
\end{abstract}
The paper is devoted to investigating single-objective and multiobjective optimization problems involving the $\ell_0$-norm function, which is nonconvex and nondifferentiable. Our motivation comes from proton beam therapy models in cancer research. The developed approach uses  subdifferential tools of variational analysis and the Gerstewitz (Tammer) scalarization function in multiobjective optimization. Based on this machinery, we propose several algorithms of the subgradient type and conduct their convergence analysis. The obtained results are illustrated by numerical examples, which reveal some characteristic features of the proposed algorithms and their interactions with the gradient descent.\\[1ex] 
\textbf{Keywords}: nonsmooth and nonconvex optimization, $\ell_0$-norm function, variational analysis and generalized differentiation,  multiobjective optimization, Gerstewitz scalarization function, subgradient algorithms, proton beam therapy\\[1.2ex]
{\bf Mathematics Subject Classification (2020)} 49J52, 49J53, 90C29, 92C50

\section{Introduction}\label{intro}

This paper revolves around variational analysis and optimization of single-objective and multiobjective models involving the so-called {\em $\ell_0$-norm function} $\|\cdot\|_0$, which signifies the number of nonzero components in a given vector. The function $\|\cdot\|_0$ (it is not actually a norm) is nondifferentiable at the origin being also nonconvex. Therefore, the standard machinery of classical and convex analysis is not applicable to the study of the $\ell_0$-norm function, which stimulate us to use for these purposes advanced tools of variational analysis and generalized differentiation dealing with nonsmoothness and nonconvexity. 

A profound interest to optimization problems involving the $\ell_0$-norm function has arisen in more recent years from applications to machine learning, signal processing, compressed sensing, etc.; see, e.g., \cite{khanh} and the references therein. It has been realized that such problems are {\em NP-hard} and reflect the {\em sparsity} in optimization, which is a challenging issue. To this end, we refer the reader to the primal-dual active set with continuation (PDASC) algorithm developed in \cite{jiao2015primal} for solving the $\ell_0$-regularized least-squares problem that frequently arises in compressed sensing. A practical method to solve $\ell_0$-norm problems for neural networks was developed in \cite{louizos2017learning}. In \cite{lee2022self}, $\ell_0$-sparse optimization was used in minimizing the number of features in deep learning. 

In recent years, the $\ell_0$-norm function has gained significant attention in the field of medical physics. Using the $\ell_0$-norm encourages sparsity and enables the extraction of meaningful information from the noisy one and/or reducing the unnecessary delivery, which plays a pivotal role to ensure accurate diagnostics and patient treatment efficiency. The study in \cite{lyu2019image} compared $\ell_p$-regularization for $p=0, 1/2, 2/3, 1$ in image-domain multimaterial decomposition for dual-energy computed tomography. It is shown therein that the smaller is $p$, the more nonconvex will be the problem, and thus it is more difficult to find optimal sparsity solutions. In particular, $\ell_0$-sparsity optimization techniques have been leveraged in radiation therapy. 

Proton therapy is an advanced type of radiation treatment for cancer that uses high-energy protons to precisely kill tumor, while minimizing damage to surrounding healthy tissues. In proton therapy, especially proton arc therapy, treatment delivery efficiency plays a crucial role in the routine clinical implementation. Previous preliminary investigations indicated that the number of energy layers and spot impact not only affects plan quality but also the treatment delivery time. Therefore, finding an optimal sparsity level of the energy layer or spot to ensure fast treatment delivery while maintaining clinically acceptable plan quality remains a significant challenge. In terms of the energy layer sparsity solution, Gu et al. \cite{Gu} formulated an optimization problem by integrating an ${\ell}_{1/2}$-regularization term for energy layer selection. Meanwhile, in the direction of spot number sparsity,
\cite{Zhao} introduce the $\ell_0$-norm concept to reduce the unnecessary spots in the complicated SPArc treatment plan. Recently, the alternating direction method of multipliers has been developed and compared with the PDASC method, which shows promising results reported in \cite{fan2024optimizing}.

Proton arc therapy delivers proton beams while the treatment machine rotates around the patient. A challenge to proton arc therapy is how to reduce beam delivery time. One approach is to decrease spot number. The alternating direction method of multipliers and the aforementioned PDASC were applied in \cite{Zhao2022ADMM}, \cite{zhao2023first} and \cite{fan2024optimizing} from the viewpoint of $\ell_0$-sparsity optimization in proton arc therapy to reduce the corresponding spot number. These strategies prioritize the principle of sparsity while assuming that optimal solutions are either nearly sparse, or can be made sparse through an appropriate transformation. 

Reducing spot number will reduce degree of freedom, which may degrade the plan quality. Thus minimizing beam delivery time and optimizing plan quality becomes two conflict goals, making the model a multicriterial optimization problem. Motivated by such practical multiobjective models arising in proton beam therapy that are unavoidably contain the $\ell_0$-norm function and the like, we aim here to provide some simplified descriptions of such model and then design novel {\em subgradient-based algorithms} to find their {\em efficient/Pareto optimal} solutions by using the precise calculation of the {\em basic/limiting subdifferential} of the $\ell_0$-norm function in the sense of Mordukhovich; see  \cite{mordukhovich2006,rockafellar1998} and the references therein. Our strategy is to start with a single-objective optimization problem of minimizing the sum of a convex differentiable function and the $\ell_0$-norm. Then we formulate a multiobjective optimization problem whose vector objective consists of scalar components of the above type. To deal with the latter problem, we employ the two scalarization techniques: the standard {\em weight-sum} approach and the (more general) {\em Gerstewitz scalarization} \cite{tammer1983,tammer2015}. In this way, the designed algorithm in the scalar $\ell_0$-norm case induces the corresponding algorithms for the multiobjective problem in question by using the aforementioned two scalarization techniques. A detailed convergence analysis is conducted fore all the proposed algorithms. Numerical calculations by using the code developed in the Proton Beam Therapy Group of the William Beaumont Hospital demonstrate the reliability and efficacy of our algorithms and thus signify a promising direction of our study to handle multifaceted optimization problems with potential wide-ranging impacts, particularly in the areas of cancer research and medical physics.

The rest of the paper is structured as follows. Section~\ref{sec:def} presents basic definitions of the normal cone, subdifferential, and $\ell_0$-norm functions with brief discussions of their underlying properties. In Section~\ref{sec:sub-cal}, we provide a complete calculation of the basic subdifferential of the $\ell_0$-norm function important for the subsequent algorithmic design. Section~\ref{sec:optim} provides the formulation of both single-objective and multiobjective optimization problems by using weight-sum scalarization. In Section~\ref{sec:tammer}, we explore some properties of the Gerstewitz scalarization function needed for our algorithmic developments, This allows us to formulate and investigate in Section~\ref{sec:G-l0} the scalarized version of $\ell_0$ multiobjective optimization problem involving the Gerstewitz function with an $\ell_0$ addition. Based on the above, the algorithms for $\ell_0$ scalar and multiobjective optimization are designed in Section~\ref{sec:algor}. Their detailed convergence analysis is conducted in Section~\ref{sec:conver}. In Section~\ref{sec:numer}, we implement the designed algorithms in numerical calculations for typical examples and discuss their characteristic features. Finally, Section~\ref{sec:conc} summarizes the main achievements of the paper and lists some topics of our future research.

\section{Basic Definitions and Discussions}\label{sec:def}

Using a geometric approach to variational analysis and generalized differentiation \cite{mordukhovich2018,mordukhovich2006}, we define first generalized normals to sets. Given a nonempty set $\Omega\subset\R^n$, the  (Fr\'echet) {\em regular normal cone} to $\Omega$ at $\bar{x} \in \Omega$ is given by
\begin{align}\label{cthuc1}
\widehat{N}(x;\Omega):= \left\{v \in\R^n\;\Big|\;\limsup\limits_{u \xrightarrow{\Omega} x} \dfrac{\langle v,u-x \rangle}{\lVert u-x\rVert} \leq 0\right\},
\end{align}
where $u\xrightarrow{\Omega} x$ means that $u \rightarrow x $ with $u \in \Omega$ . If  $x \notin \Omega$, we put $\hat{N}(x,\Omega):= \emptyset$. The (Mordukhovich) {\em basic/limiting normal cone} $N(\bar{x};\Omega)$ to $\Omega$ at $\bar{x}\in\Omega$ is defined by
\begin{equation}\label{cthuc2}
\begin{array}{ll}
 N(\bar{x};\Omega):=\Big\{v\in\R^n\;\Big|&\exists\,x_k\xrightarrow{\Omega}\bar x,\;\exists\,v_k\to v\;\mbox{ as }\;k\to\infty\\
 &\mbox{such that } \;v_k\in\widehat N(x_k;\Omega)\;\mbox{ for all }\;k\in\mathbb N:=\{1,2,\}\Big\}
 \end{array}
\end{equation}
with $ N(\bar{x};\Omega):=\emptyset$ whenever $\ox\notin\Omega$. Both normals cones in \eqref{cthuc1} and \eqref{cthuc2} reduce to the classical normal cone of convex analysis
\begin{equation*}
N(\bar x;\Omega)=\big\{v\in\R^n\;\big|\;\la v,x-\bar{x}\ra\le 0\;\mbox{ for all }\;x\in\Omega\big\}
\end{equation*}
if the set $\Omega$ is convex. If $\Omega$ is a nonconvex set, then properties of the limiting normal cone \eqref{cthuc2} are much better than those for its regular counterpart \eqref{cthuc1}, which may be even trivial (i.e., $\widehat{N}(\bar x;\Omega)=\{0\}$) while $\bar x$ is a boundary point as for the set $\Omega:=\{(x_1,x_2)\in\R^2\;|\;x_2\ge-|x_1|\}$ at the origin $\bar x=(\bar x_1,\bar x_2)=(0,0)$. 

Consider next an extended real-valued function $\varphi :\R^n\rightarrow \overline{\mathbb{R}} := (-\infty, \infty ]$ and the associated {\em domain} and {\em epigraph} sets of $\varphi$ given by
$$ 
\dom\, \varphi:=\big\{x \in\R^n\;\big|\;\varphi(x) < \infty\big\}\;\mbox{ and }\;\epi\; \varphi:=\big\{(x,\mu) \in\R^{n+1}\;\big|\;\mu\ge \varphi(x)\big\},
$$
respectively. We define the (Mordukhovich) {\em basic/limiting subdifferential} of $\varphi$ geometrically via the basic normal cone \eqref{cthuc2} to the epigraph by
\begin{align}\label{sub}
\partial \varphi(\bar{x}):=\big\{v\in\R^n\;\big|\;(v,-1) \in N((\bar{x}, \varphi(\bar{x})); \epi \, \varphi\big)\}
\end{align}
if $\bar x\in\dom\,\varphi$ and $\partial\varphi(\bar x):=\emptyset$ otherwise.  There are various analytic representations of \eqref{sub}, which can be found in the books \cite{mordukhovich2018,mordukhovich2006,rockafellar1998} and the references therein. These books, as well as the quite recent one \cite{mordukhovich2024}, contain comprehensive calculus rules and other results for the subdifferential \eqref{sub} and related constructions with a broad variety of applications that are mainly based  on the variational/extremal principles of variational analysis. Note that the subgradient set \eqref{sub} reduces to the classical gradient $\{\nabla\varphi(\bar x)\}$ for smooth functions and to the subdifferential of convex analysis if $\varphi$ is convex.\vspace*{0.03in}

The primary  object of our study and applications in this paper is the following real-valued function defined on finite-dimensional spaces.

\begin{definition}\label{def:l0}
The {\sc $\ell_0$-norm function} $\Vert \cdot \Vert_0: \R^n \rightarrow \R$ is given by
\begin{equation}\label{l0}
\Vert x \Vert_0:=\mbox{number of nonzero components of }\;x.
\end{equation}
\end{definition}

This function is clearly nondifferentiable and nonconvex, and we intend to utilize its basic subdifferential \eqref{sub} for its study and applications to optimization model. To begin with, let us illustrate the calculation of \eqref{l0} in the three-dimensional space. 

\begin{example} For the $\ell_0$-norm function \eqref{l0} on $\R^3$, we have
\begin{align*}
&\Vert (0,-1,4) \Vert_0=2,\\
&\Vert (1,0,0) \Vert_0 =1,\\
&\Vert (0,0,0) \Vert_0 =0,\\
&\Vert (1,2,3) \Vert_0 =3. 
\end{align*}
\end{example}

To further illustrate \eqref{l0}, we interpret it as a piecewise function (say, in $\R^2$) defined by
\begin{equation} \label{f-l0}
f(x,y):=\begin{cases}
0 \quad \text{if} \quad x=y=0,\\
1 \quad \text{if either} \quad x \neq 0,\;y=0, \quad \text{or} \quad \text{if}\quad x=0,\; y\neq 0,\\
2 \quad \text{otherwise}.
\end{cases}
\end{equation}
Based on \eqref{f-l0}, the graph of the $\ell_0$-norm function on $\R^2$ is 
$$
\gph \Vert\cdot \Vert_0 =\big\{(0,0,0)\big\} \bigcup\big\{(x,0,1) \mid x \neq 0\big\} \bigcup\big\{(0,y,1) \mid y \neq 0\big\} \bigcup\big\{(x,y,2) \mid x, y\neq 0\big\},
$$
which is depicted in Figure~\ref{fig:L_0 graph}.
\begin{figure}[H]
\centering
\includegraphics[width=1\textwidth]{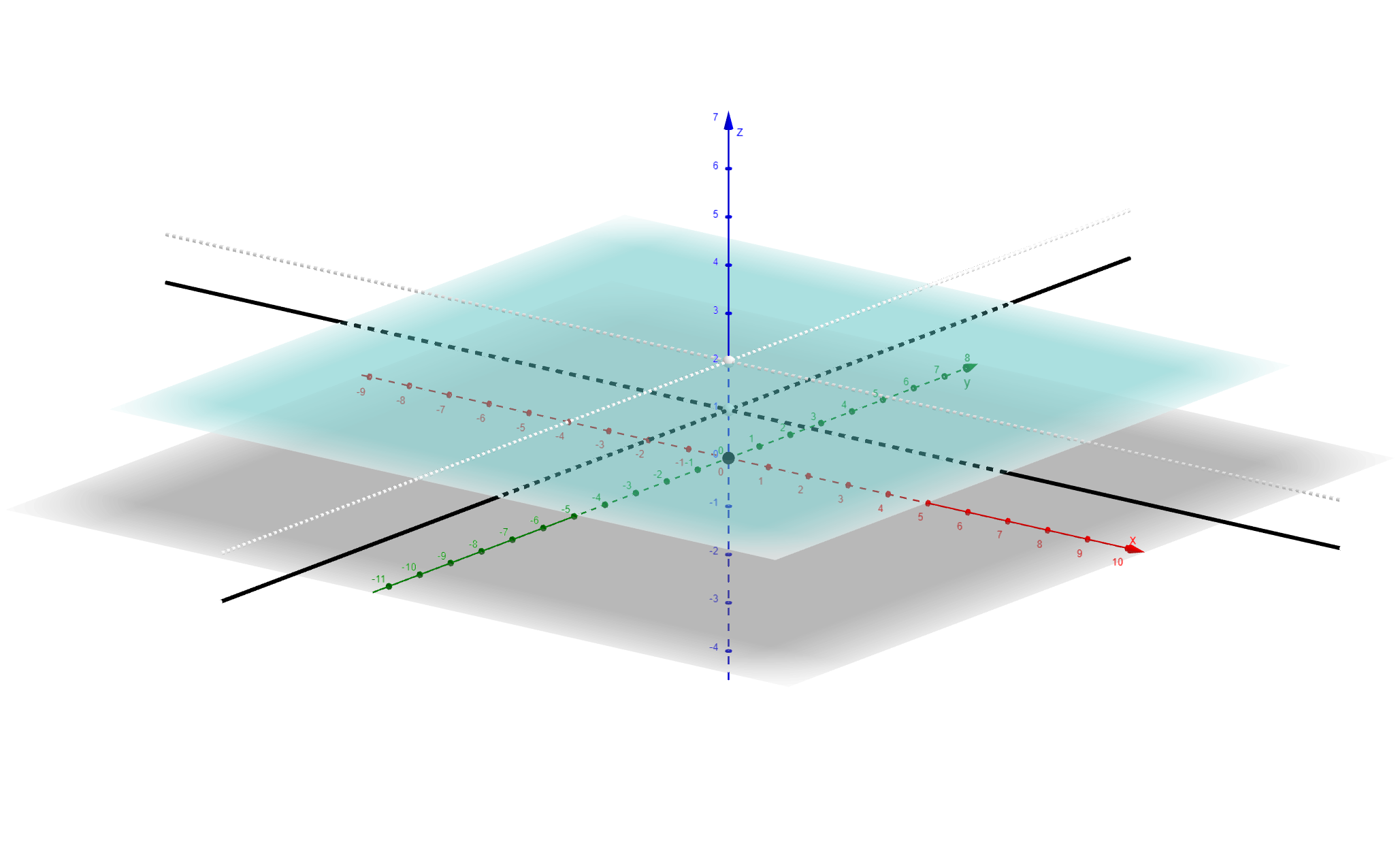}
\caption{Graph of the $\ell_0$-norm function}
\label{fig:L_0 graph}
\end{figure}

\section{Subgradient Calculation for the $\ell_0$-Norm Function}\label{sec:sub-cal}

In this section, we completely calculate the basic subdifferential \eqref{sub} of the $\ell_0$-norm function \eqref{l0} at any point of an arbitrary finite-dimensional space. 

\begin{theorem}\label{sub-cal} The subdifferential of $\Vert \cdot \Vert_0$ at any $\bar{x} = (\bar{x}_1, \ldots, \bar{x}_n) \in \R^n$ is calculated by
\begin{align*}
&\partial\Vert \cdot \Vert_0 (\bar{x}) =\big\{v=(v_1, \ldots, v_n) \in \R^n\big\}, \\
&\text{where }\;v_i\;\text{ are defined as }\;\begin{cases}
v_i =0 \quad \text{if} \quad \bar{x}_i \neq 0,\\
v_i \in \R \quad \text{if} \quad \bar{x}_i =0.
\end{cases}
\end{align*}
\end{theorem}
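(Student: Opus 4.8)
The plan is to exploit the \emph{separable} structure of the $\ell_0$-norm and reduce the computation to a one-dimensional problem. Write $\|x\|_0=\sum_{i=1}^n g(x_i)$, where the scalar function $g\colon\R\to\R$ is given by $g(t)=0$ if $t=0$ and $g(t)=1$ if $t\neq 0$. Since each summand depends on a single coordinate, $\|\cdot\|_0$ is a separable sum of the lower semicontinuous functions $g$, and I would first pin down this one-dimensional building block and then assemble the general formula via a product rule.

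First I would compute $\partial g(\bar t)$ directly from the geometric definition \eqref{sub}, i.e., via the basic normal cone \eqref{cthuc2} to $\epi\,g\subset\R^2$ at $(\bar t,g(\bar t))$. The key observation is that normal cones are local objects, so it suffices to describe $\epi\,g$ inside a small ball. For $\bar t\neq 0$ the function $g$ is locally constant (equal to $1$), so near $(\bar t,1)$ the epigraph coincides with the half-plane $\{(t,\mu)\mid\mu\ge 1\}$, whose normal cone is $\{0\}\times(-\infty,0]$; this yields $\partial g(\bar t)=\{0\}$. For $\bar t=0$, inside any ball of radius $<1$ the epigraph reduces to the vertical ray $\{0\}\times[0,\infty)$, because a nonzero $t$ forces $\mu\ge 1$, which the radius bound excludes. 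This ray is convex, so its regular and limiting normal cones at the endpoint $(0,0)$ coincide and equal $\R\times(-\infty,0]$; hence $(v,-1)$ lies in this cone for every $v\in\R$, giving $\partial g(0)=\R$.

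Next I would assemble the $n$-dimensional formula. Setting $I:=\{i\mid\bar x_i\neq 0\}$ and $J:=\{i\mid\bar x_i=0\}$, the target claim is exactly $\partial\|\cdot\|_0(\ox)=\prod_{i=1}^n\partial g(\bar x_i)$, with $\partial g(\bar x_i)=\{0\}$ for $i\in I$ and $\partial g(\bar x_i)=\R$ for $i\in J$. To obtain this I would invoke the product rule for the basic subdifferential of a separable sum of lower semicontinuous functions, namely $\partial\big(\sum_i g(x_i)\big)(\ox)=\prod_{i=1}^n\partial g(\bar x_i)$. A self-contained justification goes through the regular subdifferential, which factors across independent variables directly from its definition, combined with the fact that the limiting subdifferential is the sequential outer limit of regular subgradients; lower semicontinuity of $g$ guarantees the convergence $g(x_i^k)\to g(\bar x_i)$ needed to pass each coordinate separately to the limit.

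I expect the main obstacle to be the rigorous justification of the product/separability rule for the \emph{limiting} (nonconvex) subdifferential, since general sum rules for \eqref{sub} require qualification conditions that can fail for discontinuous functions such as $g$. The cleanest way around this is either to cite the separable-sum calculus for the basic subdifferential from \cite{mordukhovich2006,mordukhovich2018}, or, if a fully direct argument is preferred, to compute $N\big((\ox,\|\ox\|_0);\epi\,\|\cdot\|_0\big)$ by noting that within a small ball the epigraph is \emph{cylindrical} in each coordinate direction $e_i$ with $i\in I$ (perturbing a nonzero $\bar x_i$ leaves the count unchanged). This local translation invariance forces the $i$-th component of every normal to vanish and hence $v_i=0$, while in the remaining $J$-coordinates the epigraph reduces coordinatewise to the ray analyzed above, yielding $v_j\in\R$. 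Either route delivers the stated description of $\partial\|\cdot\|_0(\ox)$.
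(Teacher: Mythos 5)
Your proposal is correct, but it takes a genuinely different route from the paper's own proof. The paper argues directly from the sequential definition of the limiting normal cone \eqref{cthuc2} applied to the epigraph: it specializes to $\R^2$, splits into four cases according to which components of $\ox$ vanish, and in each case selects sequences in $\epi\,\Vert\cdot\Vert_0$ that force $v_i=0$ at the nonzero coordinates and permit arbitrary $v_i$ at the zero coordinates, with the general $\R^n$ case only asserted to be analogous. You instead exploit separability: writing $\Vert x\Vert_0=\sum_{i}g(x_i)$ with the one-dimensional counting function $g$, you compute $\partial g$ from the local geometry of its epigraph (a half-plane off the origin, a vertical ray at the origin) and then assemble the $n$-dimensional formula via the product rule for limiting subdifferentials of separable sums of lsc functions. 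Your worry about justifying that rule is well placed, and your resolution is the right one: the general sum rule for \eqref{sub} does require a qualification condition that fails here (since $g$ is discontinuous), but the separable-sum formula holds with no such condition --- it is precisely the separable-functions calculus in \cite{rockafellar1998} (Proposition~10.5) --- and your self-contained sketch is sound: the regular subdifferential factors by restricting to coordinate variations, and lower semicontinuity of each summand forces the coordinatewise $f$-attentive convergence $g(x_i^k)\to g(\bar x_i)$ needed when passing to the limiting objects. Your alternative cylindrical-epigraph argument is also valid, since near $\ox$ the epigraph is a product of $\R^{|I|}$ with a shifted vertical ray over the zero coordinates, and limiting normal cones factor over products. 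What your route buys is a rigorous proof in arbitrary dimension in one stroke, resting on a standard qualification-free calculus rule; what the paper's route buys is self-containedness from the raw definitions, at the price of working only in $\R^2$ and treating the limiting construction somewhat informally.
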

\begin{proof} For simplicity, we verify the claimed formula for the function $\|\cdot\|_0$ defined on $\R^2$. Based on the definitions in \eqref{sub} and \eqref{l0}, split the proof into the following cases:\\[0.5ex]
{\bf Case~1}: {\em $\bar{x} =(\bar{x}_1,\bar{x}_2)$ with $\bar{x}_1,\bar{x}_2 \neq 0$}. Taking a sequence $x_{1,k} x_{2,k} \rightarrow \bar{x}_1,\bar{x}_2$, we get that $x_{1,k}, x_{2,k} \neq 0$ when $x_{1,k}, x_{2,k}$ are sufficiently close to $\bar{x}_1, \bar{x}_2$. This tells us by \eqref{l0} that $\Vert (x_{1,k}, x_{2,k}) \Vert_0 =2$. Picking now  a sequence $((x_{1,k}, x_{2,k},2) \rightarrow (\bar{x}_1, \bar{x}_2,2)$ in $\epi \Vert \cdot \Vert_0$, we deduce from the definitions of the subdifferential \eqref{sub} and the basic normal cone \eqref{cthuc2} that
\begin{align*}
\limsup_{(x_{1,k}, x_{2,k}) \rightarrow (\bar{x}_1, \bar{x}_2)}\dfrac{\langle(v_1,v_2,-1),(x_{1,k}, x_{2,k},2) - (\bar{x}_1, \bar{x}_2,2)\rangle}{\Vert (x_{1,k}, x_{2,k}) - (\bar{x}_1, \bar{x}_2)\Vert} \leq 0,
\end{align*}
which is equivalently written as     
\begin{align*}
\limsup_{(x_{1,k}, x_{2,k}) \rightarrow (\bar{x}_1, \bar{x}_2)}\dfrac{\langle(v_1,v_2),(x_{1,k}, x_{2,k}) - (\bar{x}_1, \bar{x}_2)\rangle}{\Vert (x_{1,k}, x_{2,k}) - (\bar{x}_1, \bar{x}_2)\Vert} \leq 0.
\end{align*}
Choosing $x_{i,k}= \bar{x}_i$ for $i=1,2$ gives us 
$$
\limsup_{x_{j,k} \rightarrow\bar{x}_j}\dfrac{v_j(x_{j,k} -\bar{x}_j)}{\Vert x_{j,k} -\bar{x}_j \Vert } \leq 0\;\text{ for }\;j\neq i,
$$ 
which readily implies that $v_j =0$, and thus $v=(0,0)$ as claimed.\\[0.5ex]
{\bf Case~2}: {\em $\bar{x} =(0,\bar{x}_2)$ with $\bar{x}_2\neq 0$}. Choosing $x_{1,k}=0$ allows us to conclude, similarly to the proof in Case~1, that $v_2 =0$. Now let $x_{2,k} = \bar{x}_2$, and let $\{x_{1,k}\}$ be a a sequence on nonzero numbers converging to $0$. Then we have $\Vert(x_{1,k}, x_{2,k})\Vert_0 =2$ and
\begin{align*}
\limsup_{(x_{1,k}, x_{2,k}) \rightarrow (\bar{x}_1, \bar{x}_2)}\dfrac{\langle(v_1,v_2,-1),(x_{1,k}, x_{2,k},2) - (\bar{x}_1, \bar{x}_2,2)\rangle}{\Vert (x_{1,k}, x_{2,k}) - (\bar{x}_1, \bar{x}_2)\Vert} \leq 0,
\end{align*}
which can be equivalently rewritten as    
\begin{align*}
\limsup_{(x_{1,k}, x_{2,k}) \rightarrow (\bar{x}_1, \bar{x}_2)}\dfrac{\langle(v_1,v_2,-1),(x_{1,k}, x_{2,k},2) - (\bar{x}_1, \bar{x}_2,1)\rangle}{\Vert (x_{1,k}, x_{2,k}) - (\bar{x}_1, \bar{x}_2)\Vert} \leq 0
\end{align*}
and brings us therefore to the inequality
\begin{align*}
\limsup_{x_{1,k} \rightarrow \bar{x}_1}\dfrac{v_1(x_{1,k} -\bar{x}_1) -1}{\Vert x_{1,k} - \bar{x}_1\Vert} \leq 0.
\end{align*}
For any $v_1$, we choose $k$ to be so large that $v_1(x_{1,k}- \bar{x}_1) -1 \leq 0$, which ensures that $\{v=(v_1,v_2)\} = \{(v_1,0) \mid v_1 \in \R\}$
as claimed in this case.\\[0.5ex]
{\bf Case~3}: $\bar{x}=(\bar{x}_1,0)$.  The calculation of $\partial\Vert \cdot \Vert_0 (\bar{x})$ is similar to the proof in Case~2.\\[0.5ex]
{\bf Case~4}: $\bar{x}=(0,0)$. The proof is similar to Case~2 when we fix $x_{i,k}=0$ for either $i=1$ or $i=2$. This completes the proof of the theorem. 
\end{proof}

\section{Scalar and Vector Problems of $\ell_0$ Optimization}\label{sec:optim}

First we formulate here the following single-objective problem of optimization involving the $\ell_0$-norm function. Let $f: \R^n \rightarrow \R$ be a smooth and convex function whose gradient is Lipschitz continuous with a constant $L$. Consider the {\em $\ell_0$ optimization problem}:
$$ 
\text{($\ell_0$NOP)}  \qquad \min_{x \in \R^n} f(x)+\Vert x \Vert_0.
$$
The objective function in \text{($\ell_0$NOP)} is nonconvex and nondifferentiable. Let us illustrate its behavior by the following example.

\begin{example}\label{exa-loc} {\rm Consider the quadratic function $f: \R^2 \rightarrow \R$ defined by $f(x,y): = (x-\dfrac{1}{2})^2+\dfrac{1}{4}(y+1)^2$. The graph of $f(x,y) +\Vert (x,y) \Vert_0$ is depicted in Figure~\ref{fig:L_0 +f graph}.}
\end{example}
\begin{figure}[H]
\centering
\includegraphics[width=0.7\textwidth]{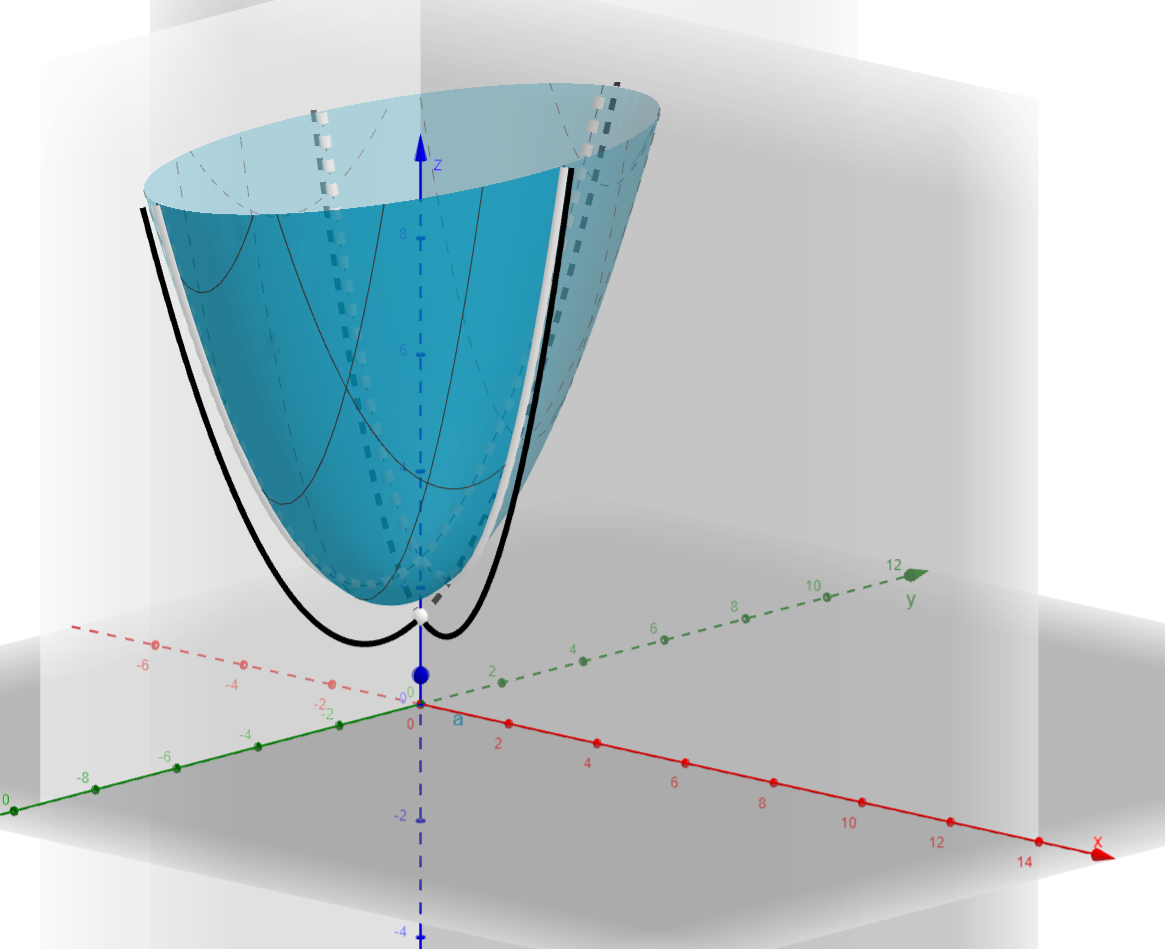}
\caption{Graph of $f(\cdot)+ \Vert \cdot \Vert_0$}
\label{fig:L_0 +f graph}
\end{figure}

As seen from Figure~\ref{fig:L_0 +f graph}, the objective function in Example~\ref{exa-loc} has  multiply local minimizers. Our goal concerning problem \text{($\ell_0$NOP)} is design an efficient subgradient algorithm, which allows us to find at least one {\em local optimal solution} to this problem. To proceed in this direction, we first construct, based on the subdifferential calculation in Theorem~\ref{sub-cal}, a certain {\em projection matrix} associated with $x\in\R^n$ over the hyperplane where some of the components of $x$ are zero. Define the {\em projection mapping} $\Tilde{I}_i : \R^n \rightarrow \R^n$ as follow: for $x_i=(x_{1,i}, \ldots , x_{n, i}) \in \R^n$, let
$\Tilde{I}_i$ be an $n \times n$ matrix such that 
\begin{align*}
&a_{j,k} = 0 \quad \text{if } j \neq k,\\
&a_{j,j} = 0 \quad \text{if } x_{j,i} =0,\\
&a_{j,j} = 1 \quad \text{if } x_{j,i} \neq 0.
\end{align*}  

The next example illustrates the construction of the projection matrix in $\R^3$.

\begin{example} 
For $x_i=(0,2,-3)$, we have
\begin{align*}
\Tilde{I}_i = \begin{bmatrix}
0 &0 &0 \\
0 &1 &0\\
0 &0 &1
\end{bmatrix}.
\end{align*}
\end{example}

Observe that the projection mapping $\Tilde{I}_i$ is defined by the value of $x_i$ with $\Tilde{I}_i x_i =x_i$. For each projection mapping, we have the corresponding {\em projection hyperplane} $I_i:= \Tilde{I}_i (x_1, \ldots, x_n)$.  This does not make any change in the value of $x_i$ while helping us to determine the direction for each iteration of our algorithms designed in Section~\ref{sec:algor}. 

The following important result allows us to find a local minimizer of problem \text{($\ell_0$NOP)} by minimizing the smooth and convex function $f$ therein over the corresponding projection hyperplane generated by the subgradient calculation for the $\ell_0$-norm function.

\begin{theorem}\label{theorem of local min}
If $\bar{x}$ solves the {\sc projection minimization problem} 
\begin{align*}
{\rm(PP)} \qquad \min_{x \in I_i} \quad f(x),
\end{align*}
then $\bar{x}$ is a local minimizer of the original $\ell_0$ optimization problem {\rm($\ell_0$OP)}.
\end{theorem}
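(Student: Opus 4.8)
The plan is to exploit the sharp contrast between the \emph{continuity} of the smooth term $f$ and the \emph{integer-valued, lower semicontinuous} behaviour of $\Vert\cdot\Vert_0$ near $\bar x$. Recall that, by Theorem~\ref{sub-cal}, the subgradients of $\Vert\cdot\Vert_0$ vanish precisely in the nonzero coordinates of a point, which is exactly what singles out the coordinate subspace $I_i$ and motivates restricting $f$ to it. Write $S:=\{j\mid\bar x_j\neq 0\}$ for the support of the solution $\bar x$ of (PP). Since $\bar x\in I_i$ forces $\bar x_j=0$ whenever $x_{j,i}=0$, we have $S\subseteq\{j\mid x_{j,i}\neq 0\}$, so any point whose support equals $S$ automatically lies in $I_i$. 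The structural observation driving the proof is that $\Vert\cdot\Vert_0$ can only \emph{increase} as one moves away from $\bar x$: the components indexed by $S$ stay nonzero on a small ball, while the remaining ones start at zero and, if activated, raise the count by at least one full unit.

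First I would fix two radii. Using $\bar x_j\neq 0$ for $j\in S$, choose $\delta_1>0$ with $\delta_1<\min_{j\in S}\abs{\bar x_j}$, which guarantees $x_j\neq 0$ for all $j\in S$ whenever $\Vert x-\bar x\Vert<\delta_1$; hence $\mathrm{supp}(x)\supseteq S$ on this ball. Using continuity of $f$ at $\bar x$, choose $\delta_2>0$ so that $\abs{f(x)-f(\bar x)}<1$ for $\Vert x-\bar x\Vert<\delta_2$, and set $\delta:=\min\set{\delta_1,\delta_2}$.

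Then I would split the ball $\set{x\mid\Vert x-\bar x\Vert<\delta}$ into two cases according to $\mathrm{supp}(x)$. If $\mathrm{supp}(x)=S$, then $x\in I_i$ and $\Vert x\Vert_0=\abs{S}=\Vert\bar x\Vert_0$, so the optimality of $\bar x$ for (PP) — which, by convexity of $f$, is a global minimum over the subspace $I_i$ — yields $f(x)\ge f(\bar x)$ and therefore $f(x)+\Vert x\Vert_0\ge f(\bar x)+\Vert\bar x\Vert_0$. If instead $\mathrm{supp}(x)\supsetneq S$, at least one new component is activated, giving the unit jump $\Vert x\Vert_0\ge\abs{S}+1=\Vert\bar x\Vert_0+1$, while $f(x)>f(\bar x)-1$ on this ball; adding these gives $f(x)+\Vert x\Vert_0>f(\bar x)+\Vert\bar x\Vert_0$. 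In both cases $\bar x$ beats its neighbours, so it is a local minimizer of the original $\ell_0$ optimization problem.

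The step I expect to require the most care is the second case: the entire argument hinges on the fact that the discrete jump of $\Vert\cdot\Vert_0$ is at least $1$ and thus strictly dominates the continuous fluctuation of $f$, which is forced below $1$ by shrinking the radius to $\delta_2$. Making this domination precise — and recording that convexity is what upgrades the (PP)-optimality of $\bar x$ into the global inequality $f(x)\ge f(\bar x)$ used in the first case — is the crux of the argument; everything else reduces to bookkeeping on supports.
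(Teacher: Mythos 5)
Your proof is correct and follows essentially the same route as the paper's: split a small ball around $\bar{x}$ into points whose support matches that of $\bar{x}$ (handled by the (PP)-optimality of $\bar{x}$ over $I_i$) and points with strictly enlarged support (handled by the unit jump $\Vert x\Vert_0 \ge \Vert \bar{x}\Vert_0 + 1$ dominating the fluctuation of $f$, forced below $1$ by continuity). If anything, your version is the more careful one, since you fix the radii $\delta_1,\delta_2$ explicitly and take $\epsilon = 1$, whereas the paper leaves the choice of neighbourhood and the requirement $\epsilon \le 1$ implicit.
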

\begin{proof}
Picking $x \in I_i$ and taking into account that $\bar{x}$ minimizes $f$ over the hyperplane $I_i$ ensures that $\Vert \bar{x}\Vert_0 =\Vert x \Vert_0$ implying therefore that $f(\bar{x}) +\Vert \bar{x} \Vert_0 \leq f(x) +\Vert x \Vert_0$. If $x \notin I_i$ being sufficiently close to $\bar{x}$, then it follows for each component $\bar{x}_j \neq 0$ of $\bar x$ that $x_i \neq 0$. In the case where $\bar{x}_i=0$, there clearly exist some nonzero components of $x_i$. This yields
\begin{align*}
\Vert \bar{x} \Vert_0 \leq \Vert x \Vert_0 -1.
\end{align*}
By the continuity of $f$, for any $\epsilon>0$ there is $\delta >0$ such that 
\begin{align*}
\vert f(x) - f(\bar{x}) \vert < \epsilon\;\mbox{ whenever }\;\Vert x-\bar{x} \Vert < \delta,
\end{align*}
and therefore $f(\bar{x}) < f(x) +\epsilon$. In this way, we arrive at the estimates
\begin{align*}
f(\bar{x}) +\Vert \bar{x} \Vert_0 &\leq f(x) +\Vert x\Vert_0 +\epsilon -1 \\
&\leq f(x) +\Vert x\Vert_0
\end{align*}
and thus complete the proof of the theorem.
\end{proof}

Now we formulate the $\ell_0$ multiobjective optimization problem, which is a multiobjective/vector version of the scalar $\ell_0$ optimization problem ($\ell_0$NOP). Consider the vector mapping $F: \R^n \rightarrow \R^m$ with $F(x):=\{F_1 (x), \ldots, F_m (x)\}$, where each $F_i(x):= f_i(x) +\Vert x\Vert_0$ as $i=1,\ldots,m$ is such that $f_i$  are convex differentiable functions with Lipschitz continuous gradients. Denote by $L_i$ the Lipschitz constant of the corresponding gradient and set $L:=\max\{L_i\mid i=1,\ldots,m\}$. The {\em $\ell_0$ multiobjective  optimization problem} is defined by
$$
(\text{MO-$\ell_0$NP}) \qquad \min_{x \in \R^n} F(x)=(F_1(x), \ldots, F_m(x)),
$$
where 'min' is understood in the sense of {\em Pareto optimality/efficiency}. 

\begin{definition}
A vector $\bar x\in\R^n$ is called a {\sc local Pareto optimal solution} to the multiobjective optimization problem {\rm(\text{MO-$\ell_0$NP})} if there exists no other point $x$ in a neighborhood $U$ of $\bar x$ for which we have
$$ 
F_i (x) \leq F_i (x),\quad i=1,\ldots,m.
$$
\end{definition}

A natural way to study multiobjective optimization problems is to use a certain {\em scalarization}, which converts the multiobjective problem in question to some problem of scalar (single-objective) optimization. the most simple scalarization approach is employing the {\em weight-sum scalarization}. For problem (\text{MO-$\ell_0$NP}), such a scalarization looks as follows:
\begin{equation}\label{weight}
\min_{x\in \R^n}\qquad\text{weight sum} (x): =\sum_{i=1}^m \omega_i F_i(x)
\end{equation}
with the weight coefficients satisfying $\omega_i \geq 0$ and $\sum_{i=1}^m \omega_i=1$. It is easy to observe the following relation between optimal solutions of the multiobjective and scalarized problems.

\begin{proposition}\label{Pareto solution}
A local minimizer of the weight sum function in \eqref{weight} is a local Pareto optimal solution to the multiobjective optimization problem {\rm(\text{MO-$\ell_0$NP})}. 
 \end{proposition}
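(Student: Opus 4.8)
The plan is a standard proof by contradiction that exploits the monotonicity of the linear functional $y \mapsto \sum_{i=1}^m \omega_i y_i$ with respect to the componentwise order on $\R^m$ whenever the weights are nonnegative. Assume that $\bar x$ is a local minimizer of the weight-sum function in \eqref{weight}, so there is a neighborhood $U$ of $\bar x$ on which $\sum_{i=1}^m \omega_i F_i(\bar x) \le \sum_{i=1}^m \omega_i F_i(x)$ for every $x \in U$. Suppose, toward a contradiction, that $\bar x$ is not a local Pareto optimal solution to $(\text{MO-$\ell_0$NP})$. Then there exists a point $x \in U$, distinct from $\bar x$, with $F_i(x) \le F_i(\bar x)$ for all $i = 1, \ldots, m$ and at least one strict inequality, say $F_j(x) < F_j(\bar x)$.

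The key step is to weight and sum these $m$ componentwise inequalities. Multiplying $F_i(x) \le F_i(\bar x)$ by $\omega_i \ge 0$ and adding over $i$ preserves the direction of the inequality, which yields $\sum_{i=1}^m \omega_i F_i(x) \le \sum_{i=1}^m \omega_i F_i(\bar x)$. If the weight $\omega_j$ attached to the strictly improved coordinate is positive, then the term $\omega_j F_j(x) < \omega_j F_j(\bar x)$ makes the summed inequality strict, so that $\sum_{i=1}^m \omega_i F_i(x) < \sum_{i=1}^m \omega_i F_i(\bar x)$. This contradicts the local minimality of $\bar x$ on $U$ and completes the argument.

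The obstacle I anticipate lies not in this short computation but in reconciling the conclusion with the hypothesis that the weights are merely nonnegative. Should $\omega_j = 0$ happen to sit on the only coordinate along which $x$ strictly improves, the weighted sum need not strictly decrease, the contradiction collapses, and $\bar x$ may fail to be efficient. To make the statement airtight I would either strengthen the hypothesis to strictly positive weights $\omega_i > 0$, which is the usual condition guaranteeing that a minimizer of the weighted sum is Pareto optimal, or adopt the weaker notion of weak Pareto optimality in which any dominating point satisfies $F_i(x) < F_i(\bar x)$ simultaneously for all $i$. In the latter case the strict inequality survives the summation for any nonnegative weights that are not all zero, and the argument goes through unchanged. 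Pinning down which of these conventions the local Pareto definition is meant to encode is the one point the write-up should settle before presenting the proof.
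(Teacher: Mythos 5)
Your computation is exactly the paper's proof of this proposition: assume $\bar x$ is not a local Pareto solution, pick a dominating point $x$ in the neighborhood $U$, multiply the componentwise inequalities by the nonnegative weights, sum, and contradict the local minimality of the weighted sum.

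The caveat you raise, however, is not a defect of your write-up but a genuine gap in the proposition and in the paper's own proof, which disposes of it with the claim that domination ``readily implies'' strictness of the weighted-sum inequality ``by using the properties of the weight coefficients.'' The stated properties, $\omega_i \ge 0$ and $\sum_{i=1}^m \omega_i = 1$, do not exclude $\omega_j = 0$ on the one strictly improving coordinate, and then the contradiction collapses exactly as you describe. The failure is real, not hypothetical: take $m=2$, $f_1(x,y)=x^2$, $f_2(x,y)=y^2$, and $(\omega_1,\omega_2)=(1,0)$. Any point $(0,y_0)$ with $y_0\neq 0$ is a local minimizer of the weight sum $x^2+\Vert(x,y)\Vert_0$ (nearby points with $x\neq 0$ pay at least $2$ in the $\ell_0$ term, while points $(0,y)$ give the same value $1$), yet it is dominated by nearby points $(0,y)$ with $|y|<|y_0|$, since $F_1$ is unchanged while $F_2$ strictly decreases; hence it is not a local Pareto solution. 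Your two proposed repairs are the standard ones: require $\omega_i>0$ for all $i$, after which your argument (and the paper's) is complete, or keep merely nonnegative weights and weaken the conclusion to weak Pareto optimality. One further point in favor of your closing remark: the paper's definition of local Pareto optimality, as literally written, omits the strict-inequality requirement on the dominating point (and contains the typo $F_i(x)\le F_i(x)$), whereas the proof negates the standard definition; under the definition as written, even strictly positive weights would not suffice, since distinct points with equal objective values would violate it. So the convention does need to be pinned down, and the statement amended, for the proposition to hold.
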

\begin{proof}
Pick an arbitrary local minimizer $\bar x$ in \eqref{weight} and show that it is a local Pareto solution to the multiobjective problem (\text{MO-$\ell_0$NP}). Assuming the contrary, we find a vector $x$ in some neighborhood $U$ of $\bar x$ satisfying the conditions
\begin{align*}
&F_i(x) \leq F_i(\bar x) \quad \text{for all} \quad i=1,\ldots,m,\\
&F_i(x) < F_i(\bar x) \quad \text{for some} \quad i.
\end{align*}
This readily implies, by using the properties of the weight coefficients in \eqref{weight} that
$$ 
\text{weight sum}(x) < \text{weight sum}(\bar x).
$$
The latter contradicts the local optimality of $\bar x$ in \eqref{weight} and thus completes the proof. 
\end{proof}

Using the weight condition $\sum_{i=1}^m \omega_i =1$ and the form of the components $F_i$ in  (\text{MO-$\ell_0$NP}), the weight sum function in \eqref{weight} can be written as
$$
\text{weight sum} (x) = \sum_{i=1}^m \omega_i f_i(x) + \Vert x\Vert_0,
$$
which shows that \eqref{weight} is a problem of scalar $\ell_0$ optimization of type (\text{$\ell_0$NOP}).

\section{Gerstewitz Scalarization Function}\label{sec:tammer}

In this section, we start exploring another approach to scalarize the $\ell_0$ multiobjective optimization problem (\text{MO-$\ell_0$NP}) that is based on using the {\em Gerstewitz scalarization function} introduced in \cite{tammer1983} and then developed in many publications; see, e.g.,  \cite{tammer2015} and the references therein. The Gerstewitz approach is more general than the weight-sum one and allows us, in particular, to handle the multiobjective problem (\text{MO-$\ell_0$NP}) with {\em Lipschitz continuous} (not just smooth) functions $f_i$, which is important for the design and justification of subgradient-type methods vs.\ the gradient descent. To begin with, we review several properties of the Gerstewitz function needed for developing our algorithms. Some of these properties are known, but nevertheless their simplified proofs are provided for completeness and the reader's convenience. Our main novelty here is the choice and verification of the {\em directions} in the Gerstewitz function in order to adjust her scalarization technique to the nonsmooth and nonsmooth $\ell_0$-norm setting in multiobjective $\ell_0$ optimization, which will implemented in the subsequent sections.

\begin{definition}\label{def:tammer} Given a closed convex set $C \subset\R^m$, a nonzero direction $k^0 \in C \backslash (-C)$, and an nonempty set $A\subset\R^m$ with $A-\R_+ k^0 \subset A$, the {\sc Gerstewitz function} $\phi_A := \phi_{A,k^0} :\R^m\rightarrow \overline{\R}$, associated with $A$ and $k^0$ is defined by
\begin{align}\label{gerst}
\phi_{A,k^0}:= \inf\big\{t \in \R \mid y \in tk^0 +A\big\}.
\end{align}
\end{definition} 

Note that the choice of the set $A$ in Definition~\ref{def:tammer} provides some flexibility in scalarization and contains the weight-sum scalarization \eqref{weight} with different weight coefficients as special cases. In particular, the case where $A=\{(y_1,y_2)\in\R^2\;|\;y_1+y_2\le 0\}$ corresponds to \eqref{weight} with $(\omega_1,\omega_2)=(1/2,1/2)$, the choice $\{(y_1,y_2)\in\R^2\;|\;2y_1+y_2\le 0\}$ gives us \eqref{weight} with $(\omega_1,\omega_2)=(2/3,1/3)$. Other examples for the choice of the $A$, which provide nonsmooth scalarizations are given in Section~\ref{sec:numer}. Figure~\ref{G-map} illustrates the construction of the Gerstewitz function, where the set $A$ has a nonsmooth boundary.
\begin{figure}[H]
\centering
\includegraphics[width=0.7\textwidth]{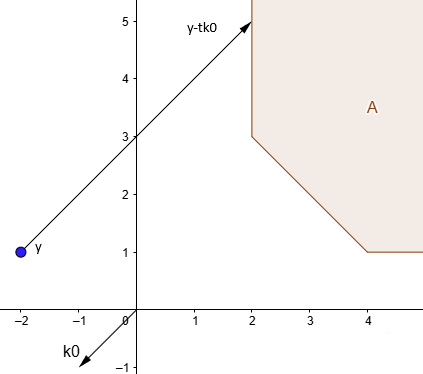}
\caption{Gerstewitz function with $A=\{(y_1,y_2) \mid y_1 \geq 2, y_2 \geq 1, y_1+y_2 \geq 5\}$}
\label{G-map}
\end{figure}

The first statement of this section provides conditions ensuring the properness and finiteness of the Gerstewitz scalarization function \eqref{gerst}.

\begin{proposition}\label{proper}
In addition the conditions on \eqref{gerst} in 
Definition~{\rm\ref{def:tammer}}, assume that $C$ is a cone. The following assertions hold:

{\bf(i)} $\phi_A$ is proper if and only if the set $A$ does not contain any line parallel to $k^0$, i.e.,
\begin{align}\label{proper1}
\mbox{ for all }\;y \in
\R^m\;\mbox{ there exists }\;t \in \R \;\mbox{ with }\; y+tk^0 \notin A.
\end{align}
        
{\bf(ii)} $\phi_A$ is finite if and only if $A$ does not contain any line parallel to $k^0$ and 
\begin{align*}
\R k^0 +A =\R^m.
\end{align*}
\end{proposition}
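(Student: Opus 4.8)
The plan is to characterize properness and finiteness of $\phi_A$ directly from its definition \eqref{gerst} as an infimum over the set $\{t \in \R \mid y \in tk^0 + A\}$. The two assertions are naturally linked: properness rules out the value $-\infty$ (and ensures the function is not identically $+\infty$), while finiteness additionally rules out the value $+\infty$ everywhere. So I would first establish (i) and then build on it to obtain (ii).

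For part (i), I would fix $y \in \R^m$ and analyze the sublevel set $T(y) := \{t \in \R \mid y \in tk^0 + A\}$, so that $\phi_A(y) = \inf T(y)$. The key structural observation is that the invariance hypothesis $A - \R_+ k^0 \subset A$ makes $T(y)$ an upward-closed (upper) ray: if $t \in T(y)$ and $t' \ge t$, then $y - t'k^0 = (y - tk^0) - (t'-t)k^0 \in A - \R_+ k^0 \subset A$, so $t' \in T(y)$. Hence $\phi_A(y) = -\infty$ precisely when $T(y)$ is unbounded below, i.e.\ when $y - tk^0 \in A$ for all sufficiently negative $t$; combined with upward-closedness this means the whole line $\{y + sk^0 \mid s \in \R\}$ lies in $A$. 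Negating this gives exactly condition \eqref{proper1}: $\phi_A(y) > -\infty$ for every $y$ iff no line parallel to $k^0$ is contained in $A$. I would then check the other half of properness, namely that $\phi_A \not\equiv +\infty$, which amounts to showing $T(y) \neq \emptyset$ for at least one $y$; since $A$ is nonempty, picking any $a \in A$ gives $a \in 0\cdot k^0 + A$, so $0 \in T(a)$ and $\phi_A(a) \le 0 < +\infty$. This settles (i).

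For part (ii), finiteness means $\phi_A(y) \in \R$ for every $y$, i.e.\ $T(y)$ is both bounded below (no value $-\infty$) and nonempty (no value $+\infty$) for all $y$. The first requirement is again the no-line condition from \eqref{proper1}, now imposed uniformly. The second requirement, $T(y) \neq \emptyset$ for every $y$, unpacks to: for each $y$ there exists $t$ with $y - tk^0 \in A$, which is precisely the statement $\R k^0 + A = \R^m$. I would present this as a clean equivalence: $T(y) \neq \emptyset$ for all $y$ $\iff$ every $y$ is of the form $tk^0 + a$ with $a \in A$ $\iff$ $\R k^0 + A = \R^m$. Conjoining the two characterizations yields the claimed biconditional.

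The main obstacle, and the place where I would be most careful, is the role of the cone hypothesis on $C$ and how the direction constraint $k^0 \in C \setminus (-C)$ interacts with the invariance $A - \R_+ k^0 \subset A$ to guarantee that the infimum is attained (or at least that $T(y)$ is genuinely a closed ray rather than an open one). The upward-closedness argument is robust, but verifying that the boundary value $\inf T(y)$ behaves correctly, and confirming that the hypotheses indeed force $T(y)$ to be an interval of the form $[\phi_A(y), \infty)$, requires invoking closedness of $A$ together with the invariance. I would therefore treat the reduction of $T(y)$ to an upper ray as the technical heart of the argument and state the two geometric reformulations (unbounded-below $\Leftrightarrow$ line in $A$; nonempty $\Leftrightarrow$ $\R k^0 + A = \R^m$) as the decisive steps.
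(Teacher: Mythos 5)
Your proof is correct and follows essentially the same route as the paper's: it characterizes the value $-\infty$ via lines parallel to $k^0$ contained in $A$ and characterizes nonemptiness of $\{t\mid y\in tk^0+A\}$ for every $y$ via $\R k^0+A=\R^m$, then conjoins the two. Your explicit use of the invariance $A-\R_+k^0\subset A$ to show each set $T(y)$ is an upper ray actually makes precise a step the paper's chain of equivalences leaves implicit, while your closing concern about attainment of the infimum and closedness of $A$ is unnecessary --- properness and finiteness depend only on whether $T(y)$ is nonempty and bounded below, never on whether it is a closed ray.
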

\begin{proof}
To verify (i), suppose that there exists $y$ such that 
\begin{align*}
\phi_A (y) = -\infty
\Longleftrightarrow y \in tk^0 +A\;\mbox{ for all }\;t\in \R
\Longleftrightarrow \{y+tk^0 \mid t \in \R\} \subset A.
\end{align*}
This directly implies that $\phi_A$ is proper if and only if \eqref{proper1} is satisfied.

To prove (ii), check first that if $A$ does not contain any line parallel to $k^0$, then $\phi_A$ is proper. Indeed, we get that $\dom \phi_A= \R k^0 +A=\R^m$, which ensures that $\phi_A(y) <\infty$, and thus $\phi_A$ is finite. Conversely, 
if $\phi_A$ is finite, then $\R^m\subset \dom\,\phi_A$ and the properness of $\phi_A$ implies by (i) that $A$ does not contain any line parallel to $k^0$. We clearly have $\R k^0 +A =\R^m$, which completes the proof of the proposition.
\end{proof}

The next theorem provides sufficient conditions ensuring {\em lower semicontinuity} (l.s.c.) and {\em continuity} of Gerstewitz function \eqref{gerst} and contains also a property crucial for applying \eqref{gerst} to $\ell_0$ multiobjective  optimization; see Remark~\ref{rem:gerst}.

\begin{theorem}\label{Theorem 2.1}
In addition to the assumptions  on \eqref{gerst} in 
Definition~{\rm\ref{def:tammer}}, suppose that $C$ is a cone and that $A$ is closed with $A\ne\R^m$. The following assertions hold:

{\bf(i)} If $A-C =A$, then $\phi_A$ is l.s.c.\ and we have
\begin{align}\label{Gerstewitz 2.1}
\big\{ y \in\R^m \mid \phi_A (y) \leq \lambda\big\} = \lambda k^0 +A\;\mbox{ for all }\;\lambda \in \R.
\end{align}

{\bf(ii)} If $A - (C\backslash \{0\})=\intset A$, then $\phi_A$ continuous and  we have
\begin{align}
&\big\{ y \in\R^m \mid \phi_A (y) < \lambda\big\} = \lambda k^0 +\intset A\;\mbox{ for all }\;\lambda \in \R, \label{Gerstewitz 2.2}\\
&\big\{ y \in\R^n \mid \phi_A (y) = \lambda\big\} = \lambda k^0 +\bd A\;\mbox{ for all }\;\lambda \in \R.\label{Gerstewitz 2.3}
\end{align}
\end{theorem}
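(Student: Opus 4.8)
The plan is to reduce the whole statement to a one-dimensional analysis by fixing $y$ and studying the \emph{admissible set} $T_y := \{t \in \R \mid y - tk^0 \in A\}$, since the elementary equivalence $y \in tk^0 + A \iff y - tk^0 \in A$ gives $\phi_A(y) = \inf T_y$ directly from Definition~\ref{def:tammer}. First I would record a structure lemma for $T_y$: because $A - \R_+ k^0 \subset A$ (this is part of Definition~\ref{def:tammer}, and is in any case implied by the hypothesis $A - C = A$ of part (i), as $\R_+ k^0 \subset C$ since $k^0 \in C$ and $C$ is a cone), the set $T_y$ is upward closed; and because $A$ is closed while $t \mapsto y - tk^0$ is continuous, $T_y$ is closed. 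Hence $T_y$ is either $\emptyset$, all of $\R$, or a closed ray $[\phi_A(y),\infty)$. This dichotomy is the backbone of both parts.

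For part (i) I would prove \eqref{Gerstewitz 2.1} by two inclusions. The inclusion $\lambda k^0 + A \subset \{\phi_A \leq \lambda\}$ is immediate: $y - \lambda k^0 \in A$ means $\lambda \in T_y$, so $\phi_A(y) \leq \lambda$. For the reverse, if $\phi_A(y) \leq \lambda$ then $\lambda \in [\phi_A(y),\infty) = T_y$ by the structure lemma (closedness of $A$ is exactly what puts the endpoint $\phi_A(y)$, hence every $t \geq \phi_A(y)$, into $T_y$), so $y - \lambda k^0 \in A$. Since $\lambda k^0 + A$ is closed, \eqref{Gerstewitz 2.1} exhibits every sublevel set of $\phi_A$ as closed, which is precisely lower semicontinuity.

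For part (ii) I would first note that $A - (C\setminus\{0\}) = \intset A$ implies $A - C = A$: writing $C = \{0\} \cup (C\setminus\{0\})$ gives $A - C = A \cup \intset A = A$, so part (i) applies and $\phi_A$ is already l.s.c.\ with sublevel sets \eqref{Gerstewitz 2.1}. Next I would establish \eqref{Gerstewitz 2.2} by two inclusions. If $\phi_A(y) < \lambda$, pick $t \in T_y$ with $t < \lambda$; then $\lambda - t > 0$ forces $(\lambda - t)k^0 \in C\setminus\{0\}$ (the cone property of $C$ is used here), so $y - \lambda k^0 = (y - tk^0) - (\lambda - t)k^0 \in A - (C\setminus\{0\}) = \intset A$. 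Conversely, if $y - \lambda k^0 \in \intset A$, a small displacement $y - (\lambda - \e)k^0$ still lies in $A$ for $\e > 0$ small, so $\lambda - \e \in T_y$ and $\phi_A(y) \leq \lambda - \e < \lambda$. Because $\lambda k^0 + \intset A$ is open, \eqref{Gerstewitz 2.2} shows all strict sublevel sets are open, i.e.\ $\phi_A$ is u.s.c.; together with the l.s.c.\ from part (i) this gives continuity. Finally \eqref{Gerstewitz 2.3} follows by set subtraction: $\{\phi_A = \lambda\} = \{\phi_A \leq \lambda\}\setminus\{\phi_A < \lambda\} = (\lambda k^0 + A)\setminus(\lambda k^0 + \intset A) = \lambda k^0 + \bd A$, using that $A$ closed yields $\bd A = A \setminus \intset A$.

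The hard part will be the careful bookkeeping of the degenerate cases $T_y = \emptyset$ (where $\phi_A(y) = +\infty$) and $T_y = \R$ (where $\phi_A(y) = -\infty$), so that the claimed set identities hold verbatim for every $\lambda \in \R$, including at such $y$; for instance one must check that the inclusions in \eqref{Gerstewitz 2.2} still go through when $\phi_A(y) = -\infty$. The one genuinely geometric step is the ``$\subseteq$'' direction of \eqref{Gerstewitz 2.2}, where the passage from $A$ to $\intset A$ relies on subtracting a \emph{nonzero} cone element $(\lambda - t)k^0$; this is exactly where the refined hypothesis $A - (C\setminus\{0\}) = \intset A$, rather than merely $A - C = A$, is indispensable.
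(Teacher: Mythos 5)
Your proposal is correct, and the set-theoretic heart of it --- the two-inclusion proofs of \eqref{Gerstewitz 2.1} and \eqref{Gerstewitz 2.2} and the subtraction argument for \eqref{Gerstewitz 2.3} --- coincides with the paper's. Where you genuinely diverge is in how semicontinuity is obtained and in the bookkeeping. The paper works globally in $\R^m\times\R$: it sets $A':=\{(y,t)\mid y\in tk^0+A\}$, shows $A'$ is upward closed in $t$ and closed (being $T^{-1}(A)$ for the continuous linear map $T(y,t):=y-tk^0$), identifies $A'=\epi\,\phi_A$, and reads lower semicontinuity off the closed epigraph, proving \eqref{Gerstewitz 2.1} only afterwards. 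You work fiber-by-fiber with $T_y$ (precisely the fiber of the paper's $A'$ over $y$), prove \eqref{Gerstewitz 2.1} first, and deduce l.s.c.\ from closedness of sublevel sets --- a dual, equally standard characterization. Your route buys two things the paper glosses over: (a) the structure lemma ($T_y$ is $\emptyset$, all of $\R$, or a closed ray) justifies the attainment step the paper uses silently when it takes $t:=\phi_A(y)$ and asserts $y\in tk^0+A$, and it also covers the case $\phi_A(y)=-\infty$, where the paper's choice of $t$ does not even parse; (b) your observation that $A-(C\setminus\{0\})=\intset A$ forces $A-C=A$ is exactly the missing license for the paper's phrase ``combining the latter with (i)'' in part (ii), since without it assertion (i) has not been shown to apply under (ii)'s hypothesis. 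What the paper gets in exchange is the translation identity \eqref{key}, $\phi_A(y+\lambda k^0)=\phi_A(y)+\lambda$, extracted as a by-product inside its proof of (i) and relied upon later (Remark~\ref{rem:gerst}); your fiber picture would deliver it in one line via $T_{y+\lambda k^0}=\lambda+T_y$, but you do not state it.
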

\begin{proof}
To verify (i), denote $A': =\{(y,t) \in \R^m\times \R \mid y \in tk^0 +A\}$ and pick $(y,t) \in A'$ with $t' \geq t$. Our goal is to show that $(y,t') \in A'$. Indeed, we get that
\begin{align*}
t k^0 +A = t' k^0 +A -(t' -t) k^0  \subset t' k^0 +A,
\end{align*}
and hence $(y,t') \in A'$. Define $T:\R^m\times \R \rightarrow Y$ by $T(y,t): =y- t k^0$, which is a linear continuous mapping. It is clear that $A' =T^{-1} (A)$ and that the assumed closedness of $A$ yields the closedness of $A'$. Since $A' \subset \epi \phi_A \subset \cl A'$ and $A'$ is closed, we get $A' = \epi \phi_A$, which justifies that $\phi_A$ is l.s.c. Considering further $y \in \lambda k^0 +A$ and $\lambda \in \{t \in \R \mid y \in t k^0 +A\}$ tells us that $\lambda \geq \phi_A(y)$, i.e., $y \in \{z \in Y \mid \phi_A(z) \leq \lambda\}$, which justifies the inclusion ``$\supset$" in \eqref{Gerstewitz 2.1}. To verify the opposite inequality therein, take $t:=\phi_A(y) \leq \lambda$ and get $y \in t k^0 +A =\lambda k^0 +A -(\lambda -t) k^0 \subset A + \lambda k^0$. This gives us therefore that
\begin{align*}
\phi_A(y+\lambda k^0) &= \inf\big\{t \in \R \mid y + \lambda k^0 \in tk^0 +A\big\}\\
&= \inf\big\{t \in \R \mid y  \in (t-\lambda)k^0 +A\big\}.
\end{align*}
Denoting further $t':=t-\lambda$, we arrive at
\begin{align*}
\phi_A(y+\lambda k^0) &= \inf\big\{t' +\lambda \in \R \mid y  \in t'k^0 +A\big\}\\
&=\inf\big\{t' \in \R \mid y  \in t' k^0 +A\big\} +\lambda\\
&=\phi_A(y) + \lambda,
\end{align*}
which thus completes the verification of assertion (i).

To prove (ii), pick any $\lambda \in \R$ and choose $y \in \lambda k^0 + \intset A$. Since $y-t k^0 \in \intset A$, there exists $\epsilon >0$ with $y-tk^0 +\epsilon k^0 \in A$. which yields $\phi_A(y)\leq \lambda -\epsilon <\lambda$. This shows that the inclusion "$\supset$" holds in \ref{Gerstewitz 2.2}. To verify the reverse inclusion, let $\lambda \in \R$ and $y \in Y$ be such that $\phi_A(y) < \lambda$. There exists $t \in \R$ with $t< \lambda$ such that $y \in t k^0 +A$. Then $y \in \lambda k^0 +A -(\lambda-t)k^0 \subset \lambda k^0 + \intset A$, and hence \eqref{Gerstewitz 2.2} holds. Moreover, this shows that $\phi_A$ is upper semicontinuous. Combining the latter with (i) tells us that $\phi_A$ is continuous. Since $\bd A =A \backslash (\intset A)$, it follows from \eqref{Gerstewitz 2.1} and \eqref{Gerstewitz 2.2} that \eqref{Gerstewitz 2.3} is satisfied, which therefore completes the proof of the theorem.
\end{proof} 

\begin{remark}\label{rem:gerst}
{\rm It follows from the proof of Theorem~\ref{Theorem 2.1} that 
\begin{align}\label{key}
\phi_A(y+\lambda k^0) = \phi_A(y) +\lambda.
\end{align}
This is a {\em key property} of the Gerstewitz scalarization  to handle $\ell_0$ multiobjective optimization and develop a subgradient algorithm to find local Pareto solutions; see below.}
\end{remark}

Now we present useful characterizations of the convexity and positive homogeneity properties of the Gerstewitz scalarization function.

\begin{proposition}\label{G-conv} Suppose that all the assumptions of Theorem~{\rm\ref{Theorem 2.1}(i)} are satisfied. Then:

{\bf(i)} $\phi_A$ is a convex function if and only if $A$ is a convex set.
 
{\bf(ii)} $\phi_A$ is a positively homogeneous function if and only if $A$ is a cone. 
\end{proposition}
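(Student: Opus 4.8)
The plan is to derive both equivalences from the epigraphical description of $\phi_A$ already obtained in the first half of the proof of Theorem~\ref{Theorem 2.1}(i), namely
$$
\epi\phi_A = A' = \big\{(y,t)\in\R^m\times\R \mid y - tk^0 \in A\big\} = T^{-1}(A),
$$
where $T(y,t):=y-tk^0$ is linear and continuous, together with the sublevel-set identity \eqref{Gerstewitz 2.1}. Before starting I would fix the conventions that \emph{positively homogeneous} means $\phi_A(\alpha y)=\alpha\phi_A(y)$ for all $\alpha>0$ and that a \emph{cone} is a set with $\alpha A=A$ for every $\alpha>0$, so the two notions are calibrated to match; I would also use the usual extended-real-valued conventions ($\alpha\cdot(\pm\infty)=\pm\infty$ for $\alpha>0$), which are harmless under the standing assumptions.

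For assertion \textbf{(i)} I would argue through the epigraph, invoking that a function is convex exactly when its epigraph is a convex set. If $A$ is convex, then $\epi\phi_A = T^{-1}(A)$ is convex as the preimage of a convex set under a linear map, so $\phi_A$ is convex. For the converse I would recover $A$ as a slice of the epigraph: since $(y,0)\in\epi\phi_A=T^{-1}(A)$ holds iff $y\in A$, we have $A\times\{0\} = \epi\phi_A \cap (\R^m\times\{0\})$. Thus convexity of $\phi_A$ forces $\epi\phi_A$ to be convex, its intersection with the hyperplane $\{t=0\}$ is convex, and projecting onto the first factor shows $A$ is convex.

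For assertion \textbf{(ii)} I would work directly from the definition \eqref{gerst}, written as $\phi_A(y)=\inf\{t\mid y-tk^0\in A\}$. If $A$ is a cone, then for $\alpha>0$ the substitution $t=\alpha s$ gives $\alpha y-tk^0=\alpha(y-sk^0)$, and since $\alpha(y-sk^0)\in A\Leftrightarrow y-sk^0\in A$ for a cone, I obtain $\phi_A(\alpha y)=\inf\{\alpha s\mid y-sk^0\in A\}=\alpha\phi_A(y)$, pulling the positive constant through the infimum; hence $\phi_A$ is positively homogeneous. For the converse I would use \eqref{Gerstewitz 2.1} at $\lambda=0$, which yields $A=\{y\mid\phi_A(y)\le 0\}$. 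If $\phi_A$ is positively homogeneous, then for $y\in A$ and $\alpha>0$ one has $\phi_A(\alpha y)=\alpha\phi_A(y)\le 0$, so $\alpha y\in A$; moreover taking $y=0$ forces $\phi_A(0)=0$, so $0\in A$, and $A$ is a cone.

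The main obstacle is bookkeeping rather than depth: I must ensure the positivity $\alpha>0$ is genuinely used at every infimum manipulation, since the identity $\inf\{\alpha s\mid\cdots\}=\alpha\inf\{s\mid\cdots\}$ fails for $\alpha\le 0$ (this is precisely why one gets \emph{positive} homogeneity and a cone rather than two-sided homogeneity and a subspace). The one substantive ingredient I lean on is the epigraph identity $\epi\phi_A=T^{-1}(A)$, so I would confirm that it is exactly what the opening paragraph of the proof of Theorem~\ref{Theorem 2.1}(i) establishes under the hypotheses $A-C=A$, $A$ closed, and $C$ a cone; everything else reduces to elementary set-theoretic and infimum computations.
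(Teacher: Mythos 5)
Your proposal is correct, and for assertion (i) it is essentially the paper's own argument: the paper's entire proof consists of the identity $\epi\,\phi_A=T^{-1}(A)$ (carried over from the proof of Theorem~\ref{Theorem 2.1}(i)) together with the remark that $\phi_A$ is convex (resp.\ positively homogeneous) if and only if $A=T(\epi\,\phi_A)$ is convex (resp.\ a cone), $T$ being linear. Your slice-and-project recovery of $A$ via $A\times\{0\}=\epi\,\phi_A\cap(\R^m\times\{0\})$ is just a concrete substitute for the paper's use of the forward image, which tacitly relies on $T(T^{-1}(A))=A$, i.e.\ on the surjectivity of $T$. Where you genuinely diverge is assertion (ii): the paper again routes everything through the epigraph (positive homogeneity of $\phi_A$ is equivalent to $\epi\,\phi_A=T^{-1}(A)$ being a cone, which is equivalent to $A$ being a cone), whereas you prove the forward implication by the substitution $t=\alpha s$ inside the defining infimum and the converse from the sublevel-set identity \eqref{Gerstewitz 2.1} at $\lambda=0$. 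Your route is more elementary and makes explicit the points the paper's one-line proof suppresses (in particular, that for extended-real-valued functions positive homogeneity is equivalent to the epigraph being a cone under positive scalings), at the cost of being less uniform across the two assertions. One small slip: positive homogeneity does not force $\phi_A(0)=0$ in the extended-real-valued setting, since $\phi_A(0)=\alpha\phi_A(0)$ for all $\alpha>0$ also admits $\phi_A(0)=\pm\infty$, and the value $-\infty$ can actually occur because properness is not among the standing assumptions of Theorem~\ref{Theorem 2.1}(i). This step is superfluous anyway: with your definition of a cone, the inclusion $\alpha A\subset A$ for all $\alpha>0$ already yields $\alpha A=A$, and membership $0\in A$ then comes for free from the closedness of $A$ (or, when $\phi_A(0)=-\infty\le 0$, directly from \eqref{Gerstewitz 2.1}).
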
   
\begin{proof}
We clearly have the representation $\epi\,\phi_A = T^{-1} (A)$, where the linear operator $T$ is defined in the proof of Theorem~\ref{Theorem 2.1}(i). Then the function $\phi_A$ is convex (resp. positively homogeneous) if and only if $A=T(\epi\,\phi_A)$ is a convex (resp.\ conic) set. 
\end{proof}

Given a closed, convex, and pointed cone $B\subset\R^m$, recall that the {\em partial order} relation $\leq_B$ on $\R^m$ induced by $B$ is defined by 
\begin{align}\label{order}
x \leq_B y \quad \text{if and only if} \quad y-x \in B. 
\end{align}
It is easy to check that that the partial order $\leq_B$ in \eqref{order} satisfies the properties:

$\bullet$ {\em Reflexivity}: $x \leq_B x$. 

$\bullet$ {\em Antisymmetry}: If $x \leq_B y$ and $y\leq_B x$, then $x=y$.

$\bullet$ {\em Transitivity}: If $x \leq_B y$ and $y \leq_B z$, then $x \leq_B z$. 

$\bullet$ {\em Convergence order}: If there are sequences $\{x_k\}$ and $\{y_k\}$ with $x_k\leq_B y_k$ for all $k \in \N$ such that $x_k \rightarrow x$ and $y_k\rightarrow y$ as $k \rightarrow \infty$, then $x \leq_B y$.\vspace*{0.05in}

The next proposition presents characterizations of the standard {\em monotonicity} property of the Gerstewitz function with respect to the partial order \eqref{order}; see the proof.

\begin{proposition}\label{monotonicity of Gerstewitz}
Let $A$ be a closed subset of $\R^m$ with $A\ne\R^m$, let $B\subset\R^m$ be a cone that induces the partial order \eqref{order}, and let $A-C=A$. Then the Gerstewitz function $\phi_A$ is monotone with respect to $B$ if and only if $A-B \subset A$.
\end{proposition}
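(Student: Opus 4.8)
The plan is to reduce the equivalence to two facts: the elementary description of $\phi_A$ through the index set $S(y):=\{t\in\R\mid y\in tk^0+A\}$, so that $\phi_A(y)=\inf S(y)$ by \eqref{gerst}, and the sublevel-set identity supplied by Theorem~\ref{Theorem 2.1}(i). Since the standing hypotheses here ($A-C=A$, $A$ closed, $A\ne\R^m$) are exactly those of Theorem~\ref{Theorem 2.1}(i), I may invoke \eqref{Gerstewitz 2.1}; taking $\lambda=0$ there yields the crucial identity $A=\{y\in\R^m\mid\phi_A(y)\le 0\}$, which lets me pass freely between the membership $y\in A$ and the scalar inequality $\phi_A(y)\le 0$. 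Recall also that monotonicity of $\phi_A$ with respect to $B$ means precisely that $y-x\in B$ implies $\phi_A(x)\le\phi_A(y)$.

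For the sufficiency direction ($A-B\subset A$ implies monotonicity) I would argue directly from the definition \eqref{gerst}, without even using closedness. Fixing $x,y$ with $y-x\in B$ and taking any $t\in S(y)$, so that $y-tk^0\in A$, I would write $x-tk^0=(y-tk^0)-(y-x)$ and use $y-x\in B$ together with the assumption $A-B\subset A$ to conclude $x-tk^0\in A$, i.e.\ $t\in S(x)$. Hence $S(y)\subset S(x)$, and taking infima gives $\phi_A(x)=\inf S(x)\le\inf S(y)=\phi_A(y)$, which is the desired monotonicity.

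For the necessity direction (monotonicity implies $A-B\subset A$) I would use the sublevel-set identity. Taking arbitrary $a\in A$ and $b\in B$, I note that $a-(a-b)=b\in B$, so $a-b\le_B a$, and monotonicity gives $\phi_A(a-b)\le\phi_A(a)$. Since $a\in A$ we have $0\in S(a)$ and thus $\phi_A(a)\le 0$; combining, $\phi_A(a-b)\le 0$, and the identity $A=\{y\mid\phi_A(y)\le 0\}$ then forces $a-b\in A$. As $a$ and $b$ were arbitrary, this establishes $A-B\subset A$.

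The computations are routine; the only point demanding care is the passage between ``$\phi_A(y)\le 0$'' and ``$y\in A$'', where the infimum in \eqref{gerst} need not be attained. This is exactly where closedness of $A$ enters, through Theorem~\ref{Theorem 2.1}(i), and it is needed only for necessity---the sufficiency direction is purely set-theoretic. I would also remark that possible infinite values of $\phi_A$ cause no trouble, since the inclusion $S(y)\subset S(x)$ handles $\pm\infty$ uniformly and in the necessity argument all relevant quantities are bounded above by $0$.
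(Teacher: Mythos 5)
Your proof is correct and follows essentially the same route as the paper: the same direct set-inclusion argument for sufficiency (the paper writes it as $y_1\in y_2-B\subset tk^0+(A-B)\subset tk^0+A$) and the same monotonicity chain $\phi_A(a-b)\le\phi_A(a)\le 0$ for necessity. The only difference is that you explicitly invoke the sublevel-set identity $A=\{y\in\R^m\mid\phi_A(y)\le 0\}$ from Theorem~\ref{Theorem 2.1}(i) to pass from $\phi_A(a-b)\le 0$ back to $a-b\in A$ (thus pinpointing where closedness of $A$ is used), a step the paper's proof leaves implicit in its final line.
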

\begin{proof}
First we check that the inclusion $A-B \subset A$ yields the monotonicity of $\phi_A$. Pick any $y_1, y_2 \in\R^m$ with $y_2 -y_1 \in B$ and choose $t\in \R$ such that $y_2 \in t k^0 +A$. Then we get $y_1 \in y_2 -B \subset t k^0 +(A-B) \subset t k^0+A$ and $\phi_A(y_1) \leq t$. Therefore, $\phi_A(y_1) \leq \phi_A(y_2)$, which justifies the monotonicity of the Gerstewitz function $\phi_A$ with respect to $B$. 

To verify the reverse implication, suppose that $\phi_A$ is monotone with respect to $B$ and then pick $y \in A$ and $b \in B$. Since $\phi_A (y) \leq 0$, $y-(y-b) \in B$, and $\phi_A$ is monotone with respect to $B$, we have the inequalities
\begin{align*}
\phi_A (y-b) \leq \phi_A (y) \leq 0.
\end{align*}
The latter shows that $A-B \subset A$ and thus completes the proof.
\end{proof}

Now we derive a simple condition, which ensures the Lipschitz continuity of the Gerstewitz function on the entire space $\R^m$. The property plays an important role in verifying the convergence of our subgradient algorithm proposed below.\vspace*{0.03in}

In what follows, we {\em identify the cone} $C$ in definition \eqref{gerst} of the Gerstewitz function with the {\em ordering cone} $B$ in \eqref{order}. In these terms, recall the useful relationship from \cite{tammer2014lipschitz}: 
\begin{align}\label{lip1}
\phi_A(y) \leq \phi_A(y') +\phi_{-C}(y-y')\;\mbox{ for all }\;y, y' \in\R^m.
\end{align}

\begin{theorem}\label{Lipschitz Property}
If $k^0 \in \intset C$, then $\phi_A=\phi_{A,k^0}$ is Lipschitz continuous on $\R^m$.
\end{theorem}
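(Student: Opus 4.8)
The plan is to derive Lipschitz continuity directly from the subadditivity-type relation \eqref{lip1}, using the interior assumption $k^0 \in \intset C$ only to convert \eqref{lip1} into a quantitative linear bound. First I would record the two-sided consequence of \eqref{lip1}: applying it as stated and then with the roles of $y$ and $y'$ interchanged gives
\[
\phi_A(y) - \phi_A(y') \leq \phi_{-C}(y-y'),\qquad \phi_A(y') - \phi_A(y) \leq \phi_{-C}(y'-y),
\]
and therefore
\[
\abs{\phi_A(y) - \phi_A(y')} \leq \max\big\{\phi_{-C}(y-y'),\,\phi_{-C}(y'-y)\big\}.
\]
Since $\Vert y-y'\Vert = \Vert y'-y\Vert$, it suffices to bound $\phi_{-C}(z)$ by a fixed multiple of $\Vert z\Vert$.

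Next I would unwind the definition of $\phi_{-C}$. Directly from \eqref{gerst} with $A$ replaced by $-C$ (which is admissible because $C$ being a convex cone containing $k^0$ yields $-C-\R_+k^0\subset -C$), one has
\[
\phi_{-C}(z) = \inf\big\{t \in \R \mid z \in tk^0 - C\big\} = \inf\big\{t \in \R \mid tk^0 - z \in C\big\}.
\]
The task thus reduces to exhibiting an admissible value $t$ of order $\Vert z\Vert$, i.e., a small $t>0$ with $tk^0 - z \in C$.

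The heart of the argument, and the step I expect to be the main obstacle, is this quantitative estimate: turning the qualitative statement ``$k^0$ lies in $\intset C$'' into a genuine linear bound via cone scaling. Since $k^0 \in \intset C$, I would fix $\rho > 0$ so that the closed ball $\bar B(k^0,\rho) \subset C$. For $z \neq 0$ set $t := \Vert z\Vert/\rho$; then $\Vert{-z/t}\Vert = \rho$, so $k^0 - z/t \in \bar B(k^0,\rho) \subset C$. Because $C$ is a cone and $t>0$, it follows that $tk^0 - z = t\big(k^0 - z/t\big) \in C$, whence $\phi_{-C}(z) \leq t = \Vert z\Vert/\rho$; the bound holds trivially when $z=0$.

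Combining the two displays then yields
\[
\abs{\phi_A(y) - \phi_A(y')} \leq \tfrac{1}{\rho}\Vert y - y'\Vert\quad\text{for all } y,y'\in\R^m,
\]
which is exactly Lipschitz continuity on $\R^m$ with constant $1/\rho$. I would note that the largest admissible $\rho$ is the distance from $k^0$ to the boundary of $C$, giving the sharpest constant this method produces. Everything apart from the interior/cone-scaling step is routine bookkeeping, relying on the already-established relation \eqref{lip1}.
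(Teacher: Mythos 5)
Your core mechanism is the same as the paper's: both proofs feed a quantitative bound on $\phi_{-C}$, extracted from $k^0\in\intset C$, into the relation \eqref{lip1}. Your cone-scaling step (a ball $\bar{B}(k^0,\rho)\subset C$ gives $\phi_{-C}(z)\le\Vert z\Vert/\rho$) is precisely the paper's Minkowski-functional argument specialized to a Euclidean ball $V$ of radius $\rho$ centered at the origin, for which $p_V(z)=\Vert z\Vert/\rho$; that part is correct and is not where the difficulty lies.

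The genuine gap is in what you dismiss as routine bookkeeping: the subtraction step. The Gerstewitz function is extended-real-valued by definition \eqref{gerst}, and passing from $\phi_A(y)\le\phi_A(y')+\phi_{-C}(y-y')$ to $\phi_A(y)-\phi_A(y')\le\phi_{-C}(y-y')$ — indeed, the very assertion of Lipschitz continuity on all of $\R^m$ — presupposes that $\phi_A$ is finite everywhere. This is not automatic from $k^0\in\intset C$ alone: if $A=\R^m$, then $\phi_A\equiv-\infty$; and without the standing assumption $A-C=A$ one can have $\R k^0+A\ne\R^m$ (take $A=-\R_+k^0$), so that $\phi_A=+\infty$ off a proper subset, and either infinity breaks your chain of inequalities. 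The paper devotes the middle of its proof to exactly this point: using $A-C=A$ and $A\ne\R^m$, it shows that $\phi_A(y_0)=-\infty$ would force $A\supset y_0+\R k^0-C=\R^m$, a contradiction, and that $\dom\,\phi_A=\R k^0+A=\R k^0-C+A=\R^m$, whence $\phi_A$ is finite by Proposition~\ref{proper}(ii). You need to insert this finiteness argument (invoking those standing assumptions) before the two-sided subtraction; with that added, your proof is complete and yields the same Lipschitz constant $1/\rho$ as the paper's.
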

\begin{proof}
Given $k^0 \in \intset C$, let $V \subset\R^m$ be a closed ball centered at $0$ such that $k^0+V \subset C$, and let $p_V: Y \rightarrow \R$ be the Minkowski functional associated with $V$. It is well known that $p_V$ is a continuous seminorm and that $V=\{y\in\R^m\mid p_V(y) \leq 1\}$. Pick $y \in\R^m$ and $t>0$ such that $y \in t V$. Then $t^{-1} y \in V \subset k^0 -C$, and so $y \in tk^0-C$ ensuring that $\phi_{-C}(y) \leq t$. Therefore, $\phi_{-C} (y) \leq p_V(y)$, which confirms that $\R k^0= \dom \phi_{-C} =\R^m$. It follows from the convexity and positive homogeneity  of $\phi_{C}$ that
\begin{align*}
\phi_{-C} (y) \leq \phi_{-C} (y') +p_V(y-y'),
\end{align*}
which implies the inequality
\begin{align}\label{inequality (6), Lipschitz prop}
\vert \phi_{-C} (y)-\phi_{-C} (y') \vert \leq p_V (y-y')\;\mbox{ for all }\;y, y' \in\R^m.
\end{align}
This justifies the Lipschitz continuity of $\phi_{-C}$. Let us show  that
$\phi_A$ does not take the value $-\infty$. If on the contrary $\phi_A (y_0) = -\infty$ for some $y_0 \in\R^m$, then $y_0 + \R k^0 \subset A$ yielding
\begin{align*}
A=A-C \supset y_0 +\R k^0 -C=y_0 +\R^m =\R^m,
\end{align*}
a contradiction. Since $\dom\,\phi_A =\R k^0 +A = \R k^0 -C+A =\R^m+A=\R^m$ and the assumptions of Proposition~\ref{proper} are clearly satisfied, we deduce from assertion (ii) therein that $\phi_A$ is finite. Using finally \eqref{lip1} together with \eqref{inequality (6), Lipschitz prop} tells us that 
\begin{align*}
\vert \phi_A (y) - \phi_A (y') \vert \leq p_V (y-y')\;\mbox{ for all }\;y, y' \in\R^m, 
\end{align*}
which justifies the Lipschitz continuity of $\phi_A$ on $\R^m$ and thus completes the proof.
\end{proof}

We need the (convex) subdifferential calculation taken from 
\cite[Corollary~4.2]{tammer2014lipschitz}, where $A_\infty$ stands for the horizon/recession  cone associated with the convex set $A$; see \cite{rockafellar1998}.

\begin{proposition}\label{gert-sub} Let $A$ be a convex set, and let $k^0 \notin A_\infty$. Then for all $\bar{y}\in\R^m$ we have 
\begin{align*}
\partial\phi_A(\bar{y}) =\big\{y^* \in {\rm cl}\,A\mid \langle k^0,y^* \rangle=1,\; \langle\bar{y},\;y^* \rangle -\phi_A (\bar{y}) \geq \langle y, y^* \rangle\;\mbox{ whenever }\;y \in A\big\},
\end{align*}
where ${\rm cl}\,A$ signifies the closure of the set $A$.
\end{proposition}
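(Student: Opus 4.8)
The plan is to treat this as a standard convex subdifferential computation, exploiting that $\phi_A$ is convex (Proposition~\ref{G-conv}(i)) and reducing everything to the Fenchel--Young characterization $y^*\in\partial\phi_A(\bar y)\iff\phi_A(\bar y)+\phi_A^*(y^*)=\langle\bar y,y^*\rangle$, where $\phi_A^*$ is the convex conjugate. First I would record that under the standing assumptions ($A-C=A$, $A$ convex) together with $k^0\notin A_\infty$, the function $\phi_A$ is proper, convex, and l.s.c.: convexity from Proposition~\ref{G-conv}(i); lower semicontinuity from the epigraphical identity $\epi\,\phi_A=\{(y,t)\mid y-tk^0\in\cl A\}$ established (for closed data) in Theorem~\ref{Theorem 2.1}(i); and properness from the recession condition $k^0\notin A_\infty$, which, exactly as in the argument of Theorem~\ref{Lipschitz Property}, rules out $\phi_A\equiv-\infty$ along any line parallel to $k^0$. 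The normalization $\langle k^0,y^*\rangle=1$ can in fact be read off immediately, independently of conjugates, from the key translation property \eqref{key}: plugging $y=\bar y+\lambda k^0$ into the subgradient inequality gives $\lambda\ge\lambda\langle k^0,y^*\rangle$ for every $\lambda\in\R$, forcing $\langle k^0,y^*\rangle=1$.

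The computational heart is the conjugate. Using the epigraphical representation and writing $y=tk^0+a$ with $a\in\cl A$, I would evaluate
\begin{align*}
\phi_A^*(y^*)=\sup_{t\in\R,\,a\in\cl A}\big[t\langle k^0,y^*\rangle-t+\langle a,y^*\rangle\big],
\end{align*}
and observe that the supremum over the free scalar $t$ is finite if and only if $\langle k^0,y^*\rangle=1$, in which case it collapses to the support function $\sigma_A(y^*):=\sup_{y\in A}\langle y,y^*\rangle$. Thus $\phi_A^*(y^*)=\sigma_A(y^*)$ on the hyperplane $\{\langle k^0,y^*\rangle=1\}$ and $+\infty$ off it, so the normalization in the claimed formula is precisely the finiteness requirement for $\phi_A^*(y^*)$.

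Substituting this conjugate into Fenchel--Young, and using that Fenchel--Young always holds with ``$\ge$'', the equality $\phi_A(\bar y)+\phi_A^*(y^*)=\langle\bar y,y^*\rangle$ is equivalent to the one-sided inequality $\sigma_A(y^*)\le\langle\bar y,y^*\rangle-\phi_A(\bar y)$, which unravels verbatim into the stated support condition $\langle\bar y,y^*\rangle-\phi_A(\bar y)\ge\langle y,y^*\rangle$ for all $y\in A$. A fully self-contained variant avoids conjugates: once $\langle k^0,y^*\rangle=1$ is in hand, the subgradient inequality rewritten as $\phi_A(y)-\langle y^*,y\rangle\ge\phi_A(\bar y)-\langle y^*,\bar y\rangle$ says $\bar y$ minimizes $g(y):=\phi_A(y)-\langle y^*,y\rangle$; testing this at points $y\in A$ (where $\phi_A(y)\le0$ by definition) yields the support inequality, and conversely the sublevel description \eqref{Gerstewitz 2.1} lets one recover the subgradient inequality for arbitrary $y$ by taking $t=\phi_A(y)$ and applying the support condition to $y-tk^0\in\cl A$.

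The step I expect to be the main obstacle is the passage to $\cl A$ when $A$ is merely convex rather than closed, and in particular reconciling the membership requirement $y^*\in\cl A$ with the support/normalization conditions: the support function must be taken over $A$ while closure enters the epigraph, and the recession hypothesis $k^0\notin A_\infty$ has to be used at exactly the point that pins down $\dom\,\sigma_A$ and guarantees the Fenchel--Young inequality is attained, turning it into the displayed characterization. This is where I would lean on the precise framework of \cite[Corollary~4.2]{tammer2014lipschitz} to control the domain condition rather than re-deriving it from scratch.
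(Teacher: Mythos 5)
First, a point of reference: the paper gives no proof of this proposition at all --- it is imported, with attribution, from \cite[Corollary~4.2]{tammer2014lipschitz} --- so there is no in-paper argument to compare yours against; it must stand on its own. On its own terms, the core of your argument is correct and complete. The conjugate computation (done directly from the definition of $\phi_A$ as an infimum, i.e.\ $\phi_A^*(y^*)=\sup_{t\in\R,\,a\in A}\bigl[t\langle k^0,y^*\rangle-t+\langle a,y^*\rangle\bigr]$, which avoids any closure issue since $\sigma_A=\sigma_{\cl A}$) gives $\phi_A^*(y^*)=\sigma_A(y^*)$ when $\langle k^0,y^*\rangle=1$ and $+\infty$ otherwise, and Fenchel--Young then yields exactly
\begin{align*}
\partial\phi_A(\bar y)=\big\{y^*\in\R^m\mid\langle k^0,y^*\rangle=1,\ \langle\bar y,y^*\rangle-\phi_A(\bar y)\geq\langle y,y^*\rangle\ \mbox{ for all }y\in A\big\}.
\end{align*}
Two simplifications: the Fenchel--Young equality characterizes subgradients (in the convex-analysis sense, which is what the paper's limiting $\partial$ reduces to here by convexity of $\phi_A$, Proposition~\ref{G-conv}(i)) for \emph{any} proper function, so your preliminary lower-semicontinuity discussion via Theorem~\ref{Theorem 2.1}(i) --- whose hypotheses require $A$ closed, which is not assumed here --- can simply be deleted; and properness follows from $k^0\notin A_\infty$ as you say. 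Your direct derivation of the normalization from \eqref{key} and the elementary conjugate-free variant are also fine.

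The genuine problem is precisely the step you deferred: the membership $y^*\in\cl A$. You treated it as an obstacle to be overcome by ``leaning on the precise framework'' of \cite[Corollary~4.2]{tammer2014lipschitz}; in fact it cannot be overcome, because under the paper's convention $\phi_A(y)=\inf\{t\mid y\in tk^0+A\}$ that membership is false, and it is not what the cited corollary asserts. The paper's own Section~\ref{sec:numer} supplies a counterexample: for $A=\{(y_1,y_2)\mid y_1+y_2\le 0\}$ and $k^0=(1,1)$ (which satisfy all hypotheses, including $k^0\notin A_\infty$) one has $\phi_A(y)=(y_1+y_2)/2$, so $\partial\phi_A(\bar y)=\{(1/2,1/2)\}$ for every $\bar y$, yet $(1/2,1/2)\notin\cl A$; similarly, for $A=-\R^2_+$ one gets $\phi_A(y)=\max\{y_1,y_2\}$, whose subdifferential at the origin is the unit simplex, disjoint from $\cl A$. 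The correct membership --- and what the source actually states --- is $y^*\in\barset\,A$, the barrier cone $\dom\,\sigma_A$ (the paper's preamble even defines an unused macro for ${\rm bar}$, suggesting a transcription slip to ``${\rm cl}$''), and that membership is automatic from your support inequality, since it forces $\sigma_A(y^*)\le\langle\bar y,y^*\rangle-\phi_A(\bar y)<\infty$. So the right conclusion of your analysis is not to defer to the citation: your computation is already a complete proof of the corrected statement, while the statement as printed admits no proof.
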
 

\section{$\ell_0$ Optimization via Gerstewitz Scalarization}\label{sec:G-l0}

Now we are in a position to define the {\em scalarized $\ell_0$ Gerstewitz function} associated with problem (MO-$\ell_0$NP) as follows:
\begin{align}\label{G-l0}
\phi_A(f_1(x) + \Vert x\Vert_0, \ldots, f_m(x) + \Vert x\Vert_0),\quad x\in\R^n.
\end{align}
Choosing $k^0= (1, \ldots, 1) \in \R^m$, assuming that $\Vert x\Vert_0 =\lambda$, and using the key property \eqref{key} allow us to rewrite \eqref{G-l0} in the equivalent form 
\begin{align*}
\phi_A(f_1(x) + \Vert x\Vert_0, \ldots, f_m(x) + \Vert x\Vert_0)&= \phi_A((f_1(x) , \ldots, f_m(x)) +  \lambda k^0) \\
&= \phi_A(f_1(x) , \ldots, f_m(x)) + \lambda \\
&= \phi_A(f_1(x) , \ldots, f_m(x)) + \Vert x \Vert_0.
\end{align*}
This leads us to the {\em Gerstewitz-scalarized} (MO-$\ell_0$NP) {\em problem} defined by
\begin{align}\label{G-multi}
\min  \phi_A(f_1(x) , \ldots, f_m(x)) + \Vert x \Vert_0,\quad x\in\R^n.
\end{align}

Observe that the above choice of the direction $k^0$ is instrumental to take the $\ell_0$-norm out of the composition in the Gerstewitz function $\phi_A$ as in \eqref{G-l0} and place the $\ell_0$-norm as an {\em additive term} in \eqref{G-multi}. This plays a crucial role in the development and justification of our algorithm to solve the multiobjective problem (MO-$\ell_0$NP) in what follows.

To deal with the Gerstewitz composition $\phi_A(f_1(x) ,\ldots, f_m(x))$ in \eqref{G-multi}. We need the following two major properties. The first one concerns the convexity of $\phi_A(f_1(x) , \ldots, f_m(x))$ with respect to the {\em nonpositive cone partial order} on $\R^m$; cf.\ Proposition~\ref{monotonicity of Gerstewitz}.

\begin{proposition}\label{multi-mon} In the setting of \eqref{G-multi}, assume that the functions $f_i :\R^n \rightarrow \R$ for $i=1,\ldots,m$ are convex and the set $A\ne\R^m$ is convex being such that $A-C =A$  and $A- \R^m_+ \subset A$. Then $\phi_A (f_1, \ldots, f_m)$ is convex with respect to the partial order $\le_{\R^{m}_+}$ in \eqref{order}.
\end{proposition}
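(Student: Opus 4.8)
The plan is to recognize the composition $x\mapsto\phi_A(f_1(x),\ldots,f_m(x))$ as a scalar function on $\R^n$ and to establish its convexity by the classical argument for composing a monotone convex outer function with a vector of convex inner functions. Three ingredients are required, and all are already available in the excerpt: first, the vector map $f=(f_1,\ldots,f_m)$ is convex with respect to $\le_{\R^m_+}$, which is nothing but the componentwise convexity of the $f_i$; second, $\phi_A$ is a convex function on $\R^m$, which follows from the assumed convexity of $A$ by Proposition~\ref{G-conv}(i); and third, $\phi_A$ is \emph{monotone nondecreasing} with respect to the order $\le_{\R^m_+}$, which follows from the hypothesis $A-\R^m_+\subset A$ by Proposition~\ref{monotonicity of Gerstewitz}.

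To carry this out, I would fix $x,x'\in\R^n$ and $\lambda\in[0,1]$ and set $z:=\lambda x+(1-\lambda)x'$. The convexity of each $f_i$ gives $f_i(z)\le\lambda f_i(x)+(1-\lambda)f_i(x')$ for every $i=1,\ldots,m$, which is precisely the vector inequality $f(z)\le_{\R^m_+}\lambda f(x)+(1-\lambda)f(x')$. Applying the monotonicity of $\phi_A$ to this inequality yields $\phi_A(f(z))\le\phi_A\big(\lambda f(x)+(1-\lambda)f(x')\big)$, and then the convexity of $\phi_A$ bounds the right-hand side by $\lambda\,\phi_A(f(x))+(1-\lambda)\,\phi_A(f(x'))$. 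Chaining the two estimates produces $\phi_A(f(z))\le\lambda\,\phi_A(f(x))+(1-\lambda)\,\phi_A(f(x'))$, which is exactly convexity of the composition.

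The step I expect to require the most care is the second one, namely matching the direction of the order $\le_{\R^m_+}$ to the sense of monotonicity supplied by Proposition~\ref{monotonicity of Gerstewitz}. Since $A-\R^m_+\subset A$ makes $\phi_A$ nondecreasing (not nonincreasing) along $\R^m_+$, the componentwise inequality $f(z)\le_{\R^m_+}\lambda f(x)+(1-\lambda)f(x')$ is \emph{preserved} rather than reversed when $\phi_A$ is applied; a sign slip here would collapse the whole argument. A secondary point worth checking is that the standing hypotheses of the cited results are genuinely in force in the setting of \eqref{G-multi}, in particular that $C$ is a cone with $k^0\in C\setminus(-C)$, that $A$ is closed with $A-C=A$, and that $A\ne\R^m$, so that Theorem~\ref{Theorem 2.1}(i) and hence Propositions~\ref{G-conv}(i) and \ref{monotonicity of Gerstewitz} apply. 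Granting these, the remaining manipulations are routine and the convexity conclusion follows immediately.
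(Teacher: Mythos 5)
Your proof is correct and takes essentially the same route as the paper: componentwise convexity of the $f_i$ yields the vector inequality $f(z)\le_{\R^m_+}\lambda f(x)+(1-\lambda)f(x')$, and the monotonicity of $\phi_A$ supplied by Proposition~\ref{monotonicity of Gerstewitz} (valid here because $A-\R^m_+\subset A$) preserves that inequality under $\phi_A$. The one substantive difference is that the paper's printed proof stops at this point: having obtained $\phi_A(f(z))\le\phi_A\big(\lambda f(x)+(1-\lambda)f(x')\big)$, it declares the proposition proved and never invokes the convexity of $\phi_A$ itself. Your additional final step --- applying Proposition~\ref{G-conv}(i) (legitimate since $A$ is assumed convex) to bound $\phi_A\big(\lambda f(x)+(1-\lambda)f(x')\big)$ by $\lambda\,\phi_A(f(x))+(1-\lambda)\,\phi_A(f(x'))$ --- is precisely what is needed to pass from that intermediate, order-type inequality to genuine convexity of the scalar composition $x\mapsto\phi_A(f_1(x),\ldots,f_m(x))$. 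In this sense your argument is more complete than the paper's: if the phrase ``convex with respect to the partial order'' is read as ordinary convexity of the composition (which is how the result is effectively used), the paper's proof is missing exactly the step you supplied. Your two cautionary checks --- the direction of monotonicity and the verification that the standing hypotheses of Theorem~\ref{Theorem 2.1}(i) are in force --- are well placed and raise no further issues.
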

\begin{proof} By the convexity of $f_i$, we get 
\begin{align*}
f_i(\lambda x + (1-\lambda) y) \leq \lambda f_i(x) + (1-\lambda)f_i
\end{align*}
for all $x,y\in\R^m$ and $i=1,\ldots,m,\;\lambda \in [0,1]$. Therefore,
\begin{align*}
 &(f_1(\lambda x + (1-\lambda) y), \ldots, f_m(\lambda x + (1-\lambda) y)) \\
&\le_{\R^{m}_+}\lambda (f_1(x), \ldots,f_m(x)) + (1-\lambda) (f_1(y), \ldots, f_m(y)).
\end{align*}
It follows from Proposition~\ref{monotonicity of Gerstewitz} that
\begin{align*}
&(\phi_A ((f_1(\lambda x + (1-\lambda) y), \ldots, f_m(\lambda x + (1-\lambda) y))\\
&\le \phi_A (\lambda (f_1(x), \ldots,f_m(x)) + (1-\lambda) (f_1(y), \ldots, f_m(y))),
\end{align*}
which completes the proof of the proposition.
\end{proof}

The second proposition concerns the Lipschitz continuity of the Gerstewitz composition  $\phi_A(f_1(x) , \ldots, f_m(x))$ in the $\ell_0$ optimization problem \eqref{G-multi}. 

\begin{proposition}
Assume that the function $f_i$, $i=1,\ldots,m$, are Lipschitz continuous on $\R^n$. Then the Gerstewitz composition
$\phi_A (f_1, \ldots, f_m)$ is also Lipschitz continuous on $\R^n$. 
\end{proposition}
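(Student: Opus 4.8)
The plan is to exhibit $\phi_A(f_1,\ldots,f_m)$ as a composition of two Lipschitz maps and to invoke the stability of the Lipschitz property under composition. First I would recall that, under the standing assumption $k^0 \in \intset C$ of the setting in \eqref{G-multi}, Theorem~\ref{Lipschitz Property} guarantees that the Gerstewitz function $\phi_A : \R^m \to \R$ is finite-valued and Lipschitz continuous on all of $\R^m$. In fact, its proof delivers the sharper estimate $\vert \phi_A(y) - \phi_A(y') \vert \leq p_V(y - y')$ with $p_V$ a continuous seminorm; since every continuous seminorm on the finite-dimensional space $\R^m$ is dominated by a multiple of the Euclidean norm, I obtain a constant $M \geq 0$ such that $\vert \phi_A(y) - \phi_A(y') \vert \leq M\Vert y - y' \Vert$ for all $y, y' \in \R^m$.

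Next I would bundle the scalar functions into the vector map $F : \R^n \to \R^m$ defined by $F(x) := (f_1(x), \ldots, f_m(x))$ and show that $F$ itself is Lipschitz. Letting $\ell_i$ denote the Lipschitz constant of $f_i$, a componentwise estimate gives
\[
\Vert F(x) - F(x') \Vert = \Big(\sum_{i=1}^m \vert f_i(x) - f_i(x') \vert^2\Big)^{1/2} \leq \Big(\sum_{i=1}^m \ell_i^2\Big)^{1/2} \Vert x - x' \Vert
\]
for all $x, x' \in \R^n$, so $F$ is Lipschitz with constant $\ell := \big(\sum_{i=1}^m \ell_i^2\big)^{1/2}$.

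Finally, chaining the two estimates yields
\[
\vert \phi_A(F(x)) - \phi_A(F(x')) \vert \leq M\Vert F(x) - F(x') \Vert \leq M\ell\,\Vert x - x' \Vert,
\]
which is exactly the asserted Lipschitz continuity of $\phi_A(f_1, \ldots, f_m)$ on $\R^n$, with constant $M\ell$.

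There is no genuine obstacle in this argument: it is a routine composition of Lipschitz maps. The only point requiring care is ensuring that $\phi_A$ is \emph{globally} Lipschitz as a function on $\R^m$ (and, in particular, finite-valued, so that the composition is well defined), which is precisely the content of Theorem~\ref{Lipschitz Property} and rests on the hypothesis $k^0 \in \intset C$; once that has been secured, the componentwise Lipschitz estimate for $F$ and the chaining above finish the proof immediately.
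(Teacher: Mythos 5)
Your proof is correct and follows essentially the same route as the paper's: both establish that the vector map $x \mapsto (f_1(x),\ldots,f_m(x))$ is Lipschitz via componentwise estimates, invoke Theorem~\ref{Lipschitz Property} (requiring $k^0 \in \intset C$) for the global Lipschitz continuity of $\phi_A$ on $\R^m$, and chain the two bounds. The only differences are cosmetic: you obtain the slightly sharper inner constant $\bigl(\sum_i \ell_i^2\bigr)^{1/2}$ in place of the paper's $\sum_i L_i$, and you explicitly note the step of dominating the seminorm $p_V$ by a multiple of the Euclidean norm, which the paper leaves implicit.
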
 
\begin{proof}
Let $L_i$ be the Lipschitz constants of $f_i$ for $i=1,\ldots,m$. Then we have 
\begin{align*}
\Vert (f_1 (x), \ldots, f_m(x)) -(f_1 (y), \ldots, f_m(y)) \Vert \leq \sum_{i=1}^m L_i \Vert x -y\Vert.
\end{align*}
It follows from Theorem~\ref{Lipschitz Property} that, under the choice of the direction $k^0$ above, the Gerstewitz function $\phi_{A,k^0}$ is Lipschitz continuous on $\R^m$ with some constant $M$. Having all of this together leads us to the inequalities
\begin{align*}
&\Vert\phi_A((f_1 (x), \ldots, f_m(x))) - \phi_A((f_1 (y), \ldots, f_m(y)) \Vert \\
&\le M\Vert (f_1 (x), \ldots, f_m(x)) - (f_1 (y), \ldots, f_m(y)) \Vert \\
&\le M\sum_{i=1}^m L_i \Vert x -y\Vert,
\end{align*}
which justifies the claimed Lipschitz continuity of the composition in \eqref{G-multi}.
\end{proof}

\section{Algorithms for $\ell_0$ Optimization}\label{sec:algor}

In this section, we design our novel algorithms to solve single-objection and multiobjective optimization problems containing the $\ell_0$-norm function. Our algorithms are of the subgradient type that are based, due to the heavy nonconvexity of the $\ell_0$-norm function, on the complete calculation of its limiting subdifferential given in Theorem~\ref{sub-cal}. 

Our further strategy is as follows. First we design two algorithms to solve the scalar problem of $\ell_0$ optimization (\rm{$\ell_0$NOP}). Considering $\ell_0$ multiobjective optimization, the results above allow us to find their Pareto solutions by employing the two scalarization approaches via  the weight-sum and Gerstewitz functions. Accordingly, we get the two subgradient-type algorithms for the $\ell_0$ multiobjective optimization problem while dealing with its scalarized versions in \eqref{weight} and \eqref{G-multi}, respectively,

The first algorithm to find local minimizers of the scalar $\ell_0$ optimization problem ($\ell_0$NOP) by using the subdifferential calculation in Theorem~\ref{sub-cal} with the projection notation described in Section~\ref{sec:optim}. Observe that the term $\tilde{I}_k \nabla f(x_k)$ in the algorithm is the gradient of the function $f$ over the projection $(f \circ \tilde{I}_k)$ at $x_k$, and we have 
\begin{align*}
\tilde{I}_k \nabla f(x_k) = \nabla f(x_k) + (v_{1,k}, \ldots, v_{n,k}) \in \nabla f(x_k) +\partial \Vert \cdot \Vert_0(x_k), 
\end{align*}
where $v_{i,k}$ are defined by
$$ 
v_{i,k}:=
\begin{cases}
\dfrac{\partial f}{\partial  x_i}(x_{i,k}) \quad &\text{if} \quad x_{i,k} =0,\\
0 \quad &\text{if} \quad x_{i,k} \neq 0.
\end{cases}$$
Clearly, this choice of $v$ is a subgradient of the $\ell_0$-norm function.

\begin{algorithm}[H]
\caption{to find a local minimizer in $\ell_0$ scalar optimization}\label{alg:cap}
\begin{algorithmic}
\Require Given $x_0$ as starting point, $\epsilon >0$, stepsize $t < \dfrac{1}{L}$
\If {$\Vert \tilde{I}_k \nabla f(x_k) \Vert \geq \epsilon$}	
\\ 	$x_{k+1} =x_k -t \cdot \tilde{I}_k \nabla f(x_k)$
\Else \quad Stop 
\EndIf 	
\end{algorithmic}
\end{algorithm}

Algorithm~\ref{alg:cap} ensures that if $x_k$ is in one hyperplane at some $k$-th iteration, then all $x_j$ are in the same hyperplane for every $j >k$. Observe also that we get the inclusion  $\tilde{I}_j \subset\tilde{I}_k$ by taking into account that for $x_{i,k}=0,$ the vector $\tilde{I}_k \nabla f(x_k)$ has the same zero value $i$-th component. By Theorem~\ref{theorem of local min}, it is important to consider each hyperplane $I_k$, since some of our local minimizers lie on these hyperplanes.

We see that Algorithm~\ref{alg:cap} stops at a local minimizer. However, problem ($\ell_0$NOP) may have many local minimizers, and it may be of interest to find other local minimizers of the problem as well. The second algorithm allows us to proceed further in this direction where $I$ stands for the identity matrix.

\begin{algorithm}[H]
\caption{to find  local multiply minimizers of ($\ell_0$NOP)}\label{alg:cap0}
\begin{algorithmic}
\Require Given $x_0$ as starting point, $\epsilon >0$, stepsize $t <\dfrac{1}{L}$
\If {$\Vert \nabla f(x_k) \Vert \geq \epsilon$ }
\If {$\Vert \tilde{I}_k \nabla f(x_k) \Vert \geq \epsilon$}	
\\\quad $x_{k+1} =x_k -t \cdot \tilde{I}_k \nabla f(x_k)$
\Else\\ \quad $x_{k+1} =x_k -t \cdot (I-\tilde{I}_k) \nabla f(x_k)$
\EndIf 
\Else \quad Stop
\EndIf	
\end{algorithmic}
\end{algorithm}

Algorithm~\ref{alg:cap0} can be interpreted as follows. Given  any starting point, we employ Algorithm~\ref{alg:cap}, which stops at some local minimizer. Then Algorithm~\ref{alg:cap0} forces the iteration out of the projection that the obtained local minimizer is in by using the matrix  $I - \tilde I_k$. This  creates a new starting point and gives us a new direction to reach a new local minimizer. In this way, we may eventually arrive at a global solution to the problem.\vspace*{0.03in}

The next algorithm addresses solving the multiobjective $\ell_0$ optimization problem via the weight-sum scalarization.

\begin{algorithm}[H]
\caption{$\ell_0$ multiobjective optimization via weight-sum scalarization}\label{alg:cap1}
\begin{algorithmic}
\Require Given $x_0$ as starting point, $\epsilon >0$, stepsize $t < \dfrac{1}{L}$, where $L:= \max_{i=1,\ldots,m} {L_i}$, choose $\omega_i$ in {\rm weight sum} \eqref{weight}
\If {$\Vert \tilde{I}_k \nabla({\rm weight\;sum})(x_k) \Vert \geq \epsilon$}	
\\ 	$x_{k+1} =x_k -t \cdot \tilde{I}_k \nabla({\rm
weight\;sum})(x_k)$
\Else \quad Stop 
\EndIf 	
\end{algorithmic}
\end{algorithm}

The last algorithm addresses finding local minimizers of the Gerstewitz-scalarized $\ell_0$ minimization problem \eqref{G-multi} with arriving at a local Pareto solution to the $\ell_0$ multiobjective optimization problem (MO-{$\ell_0$}NP) under a special choice of the set $A$.

\begin{algorithm}[H]
\caption{$\ell_0$ multiobjective optimization via Gerstewitz scalarization}\label{alg:cap2}
\begin{algorithmic}
\Require Given $x_0$ as starting point, $\epsilon >0$, step size $t$, $A \subset\R^m$ as a convex set with $A-\R^m_+ \subset A$, $k^0 = (1, \ldots, 1)$
\If {$\Vert \phi_A (f_1, \ldots ,f_m)(x^{(k)}+1) - \phi_A (f_1, \ldots ,f_m)(x^{(k)})\Vert \geq \epsilon$}	
\\ 	$x_{k+1} =x_k -t \cdot \tilde{I}_k J_f(x^{(k)})^\top g^{(k)}$, where $J_f(x^{(k)})$ is a Jacobian matrix at $x^{(k)}$ and \\ $g^{(k)} \in \partial \phi_A (f_1 (x^{(k)}), \ldots f_m(x^{(k)}))$\\
$\phi_A \circ f_{best} = \min \{\phi_A \circ f_{best}, \phi_A \circ f(x^{k+1})\}$
\Else \quad Stop 
\EndIf 	
\end{algorithmic}
\end{algorithm}

\section{Convergence Analysis}\label{sec:conver}

This section conducts convergence analysis of the proposed algorithms to find local solutions to the $\ell_0$ of single-objective and multiobjective optimization problems. It is natural to start with scalar $\ell_0$ optimization while restricting our attention to the convergence proof for Algorithm~\ref{alg:cap}. The convergence proof for Algorithm~\ref{alg:cap0} can be done similarly, and we leave the details to the reader. \vspace*{0.03in}

To begin with, let us first verify the {\em monotonicity property} of Algorithm~\ref{alg:cap}, which is crucial for the proof of convergence while being of its own interest.

\begin{theorem}\label{Monotone}
Under the standing assumptions on the function $f$ formulated in 
Section~{\rm\ref{sec:optim}}, We have the following monotonicity property of iterates in Algorithm~{\rm\ref{alg:cap}} with respect to the objective function of problem {\rm({$\ell_0$NOP})}:
\begin{align*}
f(x_{k+1}) +\Vert x_{k+1} \Vert_0 \leq f(x_k) +\Vert x_k \Vert_0.
\end{align*}
\end{theorem}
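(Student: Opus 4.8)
The plan is to decompose the change in the objective $f(\cdot)+\Vert\cdot\Vert_0$ into its smooth part and its $\ell_0$ part, and to show that each part is nonincreasing along the iterates of Algorithm~\ref{alg:cap}. Concretely, I would establish the two inequalities $\Vert x_{k+1}\Vert_0\le\Vert x_k\Vert_0$ and $f(x_{k+1})\le f(x_k)$ separately, and then simply add them to obtain the claim.

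For the $\ell_0$ part, the key observation is that the search direction $\tilde I_k\nabla f(x_k)$ is supported on the support of $x_k$: by the construction of the projection matrix $\tilde I_k$ in Section~\ref{sec:optim}, its $j$-th diagonal entry vanishes whenever $x_{j,k}=0$, so the $j$-th component of $\tilde I_k\nabla f(x_k)$ is zero there. Hence the update $x_{k+1}=x_k-t\,\tilde I_k\nabla f(x_k)$ leaves every zero component of $x_k$ equal to zero, which forces $\mathrm{supp}(x_{k+1})\subseteq\mathrm{supp}(x_k)$ and therefore $\Vert x_{k+1}\Vert_0\le\Vert x_k\Vert_0$. A nonzero component of $x_k$ might happen to be sent exactly to zero by the update, but this only lowers the count, so the inequality is preserved either way.

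For the smooth part, I would invoke the standard descent lemma for a differentiable function with $L$-Lipschitz gradient, namely $f(y)\le f(x)+\langle\nabla f(x),y-x\rangle+\tfrac{L}{2}\Vert y-x\Vert^2$, applied at $x=x_k$ and $y=x_{k+1}$. The crucial algebraic simplification uses that $\tilde I_k$ is a diagonal $0/1$ matrix, hence symmetric and idempotent, so that $\langle\nabla f(x_k),\tilde I_k\nabla f(x_k)\rangle=\Vert\tilde I_k\nabla f(x_k)\Vert^2$. Substituting the update rule then yields $f(x_{k+1})\le f(x_k)-t\bigl(1-\tfrac{Lt}{2}\bigr)\Vert\tilde I_k\nabla f(x_k)\Vert^2$. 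Since the stepsize satisfies $t<1/L$, the factor $1-\tfrac{Lt}{2}$ is bounded below by $1/2>0$, so the subtracted term is nonnegative and we conclude $f(x_{k+1})\le f(x_k)$.

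Adding the two inequalities gives the asserted monotonicity $f(x_{k+1})+\Vert x_{k+1}\Vert_0\le f(x_k)+\Vert x_k\Vert_0$. I expect the genuinely delicate point to be the $\ell_0$ part rather than the smooth descent: one must argue that the support can only shrink and never grow, which is precisely what the projection matrix $\tilde I_k$ guarantees by annihilating any direction outside $\mathrm{supp}(x_k)$. The smooth estimate is then routine, once the identity $\langle\nabla f(x_k),\tilde I_k\nabla f(x_k)\rangle=\Vert\tilde I_k\nabla f(x_k)\Vert^2$ is noted and the stepsize bound $t<1/L$ is used.
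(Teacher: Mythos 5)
Your proof is correct and follows essentially the same route as the paper's: the descent lemma with stepsize $t<1/L$ yields $f(x_{k+1})\le f(x_k)$, while the support-shrinking property of the update (zero components of $x_k$ are annihilated by $\tilde I_k$, hence remain zero) yields $\Vert x_{k+1}\Vert_0\le\Vert x_k\Vert_0$, and adding the two inequalities finishes the argument exactly as in the paper. If anything, your write-up is slightly more explicit than the paper's, since you spell out both the symmetry/idempotence identity $\langle\nabla f(x_k),\tilde I_k\nabla f(x_k)\rangle=\Vert\tilde I_k\nabla f(x_k)\Vert^2$ (used implicitly there) and the harmless case of a nonzero component landing exactly at zero.
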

\begin{proof}
It follows from the convexity and smoothness of $f$ with the Lipschitz gradient that
\begin{align*}
f(y) &\leq f(x) + \nabla f(x)^\top (y-x) + \dfrac{1}{2} \nabla^2 f(x) \Vert y-x \Vert_2^2\\
&\leq f(x) + \nabla f(x)^\top (y-x) + \dfrac{1}{2} L \Vert y-x \Vert_2^2,
\end{align*}
where $L$ is a Lipschitz constant of $f$. Putting there $y=x_{k+1}= x_k -t \cdot \tilde{I}_k \nabla f(x_k)$ and $x=x_k$ brings us to the estimate
\begin{align}
f(x_{k+1}) &\leq f(x_k) + \nabla f(x_k)^\top  (x_{k+1}-x_k) + \dfrac{1}{2} L \Vert x_{k+1}-x_k \Vert_2^2 \nonumber\\
&=f(x_k) + \nabla f(x_k) (x_k -t \cdot \tilde{I}_k \nabla f(x_k)-x_k) + \dfrac{1}{2} L \Vert x_k -t \cdot \tilde{I}_k \nabla f(x_k)-x_k \Vert_2^2 \nonumber\\
 &=f(x_k) - \nabla f(x_k) t \cdot \tilde{I}_k \nabla f(x_k) + \dfrac{1}{2} L \Vert t \cdot \tilde{I}_k \nabla f(x_k) \Vert_2^2 \nonumber\\
 &=f(x_k) - t   \Vert \tilde{I}_k \cdot\nabla f(x_k) \Vert_2^2 + \dfrac{1}{2} Lt^2 \Vert  \cdot \tilde{I}_k \nabla f(x_k) \Vert_2^2 \nonumber\\
&=f(x_k)  - t\left(1- \dfrac{1}{2} Lt \right) \Vert  \tilde{I}_k \nabla f(x_k) \Vert_2^2.\nonumber
\end{align}
Using $t\leq \dfrac{1}{L}$, we know that $- \left(1- \dfrac{1}{2} Lt \right)=\dfrac{1}{2} Lt -1 \leq \dfrac{1}{2} L \cdot \dfrac{1}{L} -1 =\dfrac{1}{2}-1=-\dfrac{1}{2}$. Plugging the latter into the last inequality tells us that  
\begin{align}\label{eq1}
f(x_{k+1}) \leq f(x_k) -\dfrac{1}{2} t \Vert  \tilde{I}_k \nabla f(x_k) \Vert_2^2.
\end{align}
Since $\dfrac{1}{2} t \Vert  \tilde{I}_k \nabla f(x_k) \Vert_2^2 > 0$ unless $\tilde{I}_k\nabla f(x_k)=0$, which is the stopping condition for optimal solutions to problem $(PP)$ in Theorem~\ref{theorem of local min}, we get
\begin{align}\label{f-mon}
f(x_{k+1}) \leq f(x_k).
\end{align}
As discussed  after the formulation of Algorithm~\ref{alg:cap}, it follows from $\tilde{I}_j \subset\tilde{I}_k$ for $j>k$ that $\Vert x_j \Vert_0 \leq \Vert x_k \Vert_0$, and  hence $\Vert x_{k+1} \Vert_0 \leq \Vert x_{k} \Vert_0$. Combining with \eqref{f-mon} yields
\begin{align*}
f(x_{k+1}) + \Vert x_{k+1} \Vert_0  \leq f(x_k)+ \Vert x_{k} \Vert_0,
\end{align*}
which therefore completes the proof of the theorem.
\end{proof}

Now we are ready to establish the convergence of Algorithm~\ref{alg:cap} to a local minimizer of problem ($\ell_0$NOP) with a rate estimate. 

\begin{theorem}\label{Convergence} Let $x_k$ be the $k$-th iteration of Algorithm~{\rm \ref{alg:cap}} of Theorem~{\rm\ref{Monotone}}, and let $I_k$ be the associated smallest subspace. Then the sequence $\{x_k\}$ converges to a local minimizer $\bar x$ of problem {\rm($\ell_0$NOP)} with the rate estimate
\begin{align*}
f(x_{(k+s)}) +\Vert x_{(k+s)} \Vert -f(\bar x) -\Vert \bar x \Vert \leq \dfrac{\Vert x_{k} -\bar x\Vert_2 ^2}{2st}.
\end{align*}
\end{theorem}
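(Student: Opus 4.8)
The plan is to show that after finitely many iterations the active support of the iterates freezes, so that Algorithm~\ref{alg:cap} degenerates into ordinary gradient descent of the smooth convex function $f$ restricted to a fixed coordinate subspace; the asserted $O(1/s)$ estimate then follows from the textbook analysis of gradient descent, with the two $\ell_0$-terms cancelling because all late iterates share the support of the limit. First I would establish this stabilization. As noted right after Algorithm~\ref{alg:cap}, a coordinate that is zero at step $k$ stays zero thereafter, so $\tilde I_j\subset\tilde I_k$ and $\Vert x_j\Vert_0\le\Vert x_k\Vert_0$ for $j>k$. Since $\Vert\cdot\Vert_0$ is integer valued and nonincreasing along the iterates, it is eventually constant; once the number of nonzero components stabilizes, the support itself is frozen. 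Hence there exist $k_0$ and a fixed projection $\tilde I$ with $\tilde I_k=\tilde I$ for all $k\ge k_0$, and all such $x_k$ lie in the fixed linear subspace $I:=\mathrm{range}(\tilde I)$. On $I$ the recursion reads $x_{k+1}=x_k-t\,\tilde I\nabla f(x_k)$, where $\tilde I\nabla f(x_k)$ is exactly the intrinsic gradient of the restriction $f|_I$; this restriction inherits convexity and $L$-Lipschitz smoothness from $f$. Letting $\bar x$ minimize $f$ over $I$ (which exists under coercivity of $f$ on $I$, as the statement presupposes), Theorem~\ref{theorem of local min} guarantees that $\bar x$ is a local minimizer of problem ($\ell_0$NOP).

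Next I would derive the standard one-step inequality. Because $x_k,\bar x\in I$, their difference lies in $I$, and since $\tilde I$ is the orthogonal (diagonal) projection onto $I$ it fixes this difference, so $\langle\nabla f(x_k),x_k-\bar x\rangle=\langle\tilde I\nabla f(x_k),x_k-\bar x\rangle$. Combining the descent estimate $f(x_{k+1})\le f(x_k)-\tfrac12 t\Vert\tilde I\nabla f(x_k)\Vert_2^2$ already obtained in Theorem~\ref{Monotone} with the convexity bound $f(x_k)\le f(\bar x)+\langle\nabla f(x_k),x_k-\bar x\rangle$ and completing the square yields
$$
f(x_{k+1})-f(\bar x)\le\frac{1}{2t}\bigl(\Vert x_k-\bar x\Vert_2^2-\Vert x_{k+1}-\bar x\Vert_2^2\bigr).
$$

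Finally I would telescope this over $s$ consecutive steps (all with index $\ge k_0$), drop the nonnegative endpoint term $\Vert x_{k+s}-\bar x\Vert_2^2$, and use the monotonicity of $\{f(x_k)\}$ from Theorem~\ref{Monotone} to bound the sum of the $s$ terms $f(x_{j+1})-f(\bar x)$ from below by $s\bigl(f(x_{k+s})-f(\bar x)\bigr)$, which gives $f(x_{k+s})-f(\bar x)\le\Vert x_k-\bar x\Vert_2^2/(2st)$. Since every iterate of index $\ge k_0$ has the same support as $\bar x$, we have $\Vert x_{k+s}\Vert_0=\Vert\bar x\Vert_0$, so adding these equal terms to both sides produces the stated estimate without changing it. For convergence of the iterates themselves, the one-step inequality also shows $\Vert x_{k+1}-\bar x\Vert_2\le\Vert x_k-\bar x\Vert_2$, i.e. Fej\'er monotonicity relative to $\bar x$; together with $f(x_k)\to f(\bar x)$ and continuity of $f$, a routine Fej\'er argument upgrades the convergence of function values to convergence of $\{x_k\}$ to $\bar x$.

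The hard part is the stabilization step: it is precisely the $\ell_0$-structure encoded in Theorem~\ref{sub-cal} (zero coordinates stay zero, hence the support can only shrink and therefore freezes) that converts the genuinely nonconvex and nonsmooth iteration into an eventual smooth convex gradient descent, to which the classical rate applies. A secondary but essential subtlety is the identification of the projected vector $\tilde I\nabla f(x_k)$ with the intrinsic gradient of $f|_I$, without which convexity could not be invoked along the subspace; and the existence of the minimizer $\bar x$ (via coercivity or boundedness of the iterates) must be secured to make both the rate and the Fej\'er argument meaningful.
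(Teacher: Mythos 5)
Your proof is correct and follows essentially the same route as the paper's: once the support stabilizes, the iteration is ordinary gradient descent of $f$ restricted to a fixed coordinate subspace, and the descent estimate from Theorem~\ref{Monotone} combined with convexity, completing the square, telescoping, and monotonicity yields the $O(1/s)$ rate. In fact you make explicit several points the paper leaves implicit --- the freezing of the support via integer-valuedness of $\Vert\cdot\Vert_0$, the identity $\langle\nabla f(x_k),x_k-\bar x\rangle=\langle\tilde I\nabla f(x_k),x_k-\bar x\rangle$ needed to complete the square, the cancellation of the $\ell_0$ terms in the final estimate, and the Fej\'er argument upgrading convergence of function values to convergence of the iterates.
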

\begin{proof}
The iterative process stops after $k$-th iteration if we have $x_k \in I_k$ for the corresponding hyperplane. Our $\ell_0$ optimization problem ($\ell_0$OP) turns into a projection problem $(PP)$, which is the convex problem by the convexity of the function $f \circ \tilde{I}_k$. Starting from this iteration, Algorithm~\ref{alg:cap} becomes the classical gradient descent method with $x_k$ being the starting point of the algorithm, and thus its the convergence follows.

To proceed in this direction, let $x_k$ be the starting point of subproblem $(PP)$. The assumption that $x_k$ is the $k$-th iteration of ($\ell_0$OP) with $I_k$ being the associated smallest subspace ensures that $\Vert x_{k+j} \Vert_0 =\Vert x_{k} \Vert_0$ for all $j$. Since $(PP)$ is a convex problem, we have
\begin{align*}
f(\bar x) -f(x) \geq \nabla f(x)^\top(x-\bar x),\\
f(x) \leq f(\bar x) + \nabla f(x)^\top (x-\bar x).
\end{align*}
Denote $x^+:= x-t\cdot\tilde{I}(x) \nabla f(x)$, where $\tilde{I}(x)$ signifies the projection matrix associated with $x$, and deduce from (\ref{eq1}) the relationships
\begin{align*}
f(x_{+}) &\leq f(x) -\dfrac{1}{2} t \Vert  \tilde{I}(x) \nabla f(x) \Vert_2^2\\
&\leq f(\bar x) + \nabla f(x)^\top (x-\bar x) -\dfrac{1}{2} t \Vert  \tilde{I}(x) \nabla f(x) \Vert_2^2\\
&=f(\bar x) + \dfrac{1}{2t}  \left(2t \nabla f(x)^\top (x-\bar x) -t^2 \Vert  \tilde{I}(x) \nabla f(x) \Vert_2^2 - \Vert x-\bar x\Vert^2_2+ \Vert x-\bar x\Vert^2_2\right)\\
&=f(\bar x) + \dfrac{1}{2t}  \left(\Vert x-\bar x\Vert^2_2   - \Vert x- t\tilde{I}(x) \nabla f(x)-\bar x\Vert^2_2 \right)\\
&= f(\bar x)+ \dfrac{1}{2t}  \left(\Vert x-\bar x\Vert^2_2 -\Vert x^+-\bar x\Vert^2_2\right).
\end{align*}
Summing over iterations leads us to the estimates 
\begin{align*}
\sum_{i=1}^s (f(x_{k+i}) -f(\bar x)) &\leq  \sum_{i=1}^s \dfrac{1}{2t} \left(\Vert x_{(k+i-1)}-\bar x\Vert^2_2 -\Vert x_{k+i}-\bar x\Vert^2_2\right)\\
&= \dfrac{1}{2t} \left( \Vert x_{k}-\bar x\Vert^2_2 -\Vert x_{k+s}-\bar x\Vert^2_2\right)\\
&\leq \dfrac{1}{2t} \Vert x_{k}-\bar x\Vert^2_2.
\end{align*}
It follows from Theorem~\ref{Monotone} that 
\begin{align*}
f(x_{k+s}) -f(\bar x) &\leq \dfrac{1}{s} \sum_{i=1}^s (f(x_{k+i}) -f(\bar x))\\
&\leq \dfrac{\Vert x_{k} -\bar x\Vert_2 ^2}{2st}
\end{align*}
which therefore completes the proof of the theorem.
\end{proof}

The next theorem verifies the convergence and the rate estimates for Algorithm~\ref{alg:cap0} to find a local Pareto solution of the $\ell_0$ multiobjective optimization problem (MO-{$\ell_0$}NP).

\begin{theorem}\label{alg:weight}
Consider problem {\rm(MO-{$\ell_0$}NP)} under the standing assumptions.
Then Algorithm~{\rm\ref{alg:cap1}} converges to a local Pareto solution of {\rm(MO-{$\ell_0$}NP)} with the rate estimate
\begin{align*}
{\rm(weight\;sum)}(x_{(k+s)}) +\Vert x_{(k+s)} \Vert {\rm-(weight\;sum)}(\bar x) -\Vert \bar x \Vert \leq \dfrac{\Vert x_{k} -\bar x\Vert_2 ^2}{2s t},
\end{align*}
\end{theorem}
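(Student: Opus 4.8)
The plan is to reduce the entire statement to the already-established scalar convergence result in Theorem~\ref{Convergence} by recognizing that Algorithm~\ref{alg:cap1} is nothing but Algorithm~\ref{alg:cap} applied to a single smooth convex function. First I would set $g(x):=\sum_{i=1}^m\omega_i f_i(x)$ and check that $g$ meets the standing assumptions imposed on the scalar problem: being a nonnegative combination of the convex differentiable functions $f_i$, the map $g$ is convex and differentiable, and its gradient $\nabla g=\sum_{i=1}^m\omega_i\nabla f_i$ is Lipschitz continuous with constant at most $\sum_{i=1}^m\omega_i L_i\le\max_i L_i=L$, where the bound crucially uses the normalization $\sum_{i=1}^m\omega_i=1$. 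Consequently the stepsize restriction $t<1/L$ prescribed in Algorithm~\ref{alg:cap1} is exactly the one required by the scalar analysis.

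Next I would invoke the identity recorded after Proposition~\ref{Pareto solution}, namely
\[
\text{weight sum}(x)=\sum_{i=1}^m\omega_i f_i(x)+\Vert x\Vert_0=g(x)+\Vert x\Vert_0,
\]
which exhibits the weight-sum scalarized problem \eqref{weight} as a scalar $\ell_0$ optimization problem of type ($\ell_0$NOP) with smooth part $g$. Under this identification the gradient $\nabla(\text{weight sum})$ appearing in Algorithm~\ref{alg:cap1} is simply $\nabla g$, and the projection matrices $\tilde I_k$ are formed from the subdifferential of $\Vert\cdot\Vert_0$ computed in Theorem~\ref{sub-cal} exactly as in the scalar case. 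Hence Algorithm~\ref{alg:cap1} coincides iterate-for-iterate with Algorithm~\ref{alg:cap} run on the objective $g+\Vert\cdot\Vert_0$.

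Having made this reduction, I would apply Theorem~\ref{Monotone} to obtain the descent property of the iterates and then Theorem~\ref{Convergence} to conclude that $\{x_k\}$ converges to a local minimizer $\bar x$ of $g+\Vert\cdot\Vert_0$, together with the rate estimate
\[
g(x_{(k+s)})+\Vert x_{(k+s)}\Vert-g(\bar x)-\Vert\bar x\Vert\le\frac{\Vert x_k-\bar x\Vert_2^2}{2st},
\]
which is precisely the claimed bound once $\text{weight sum}$ is understood as its smooth part $g$. Finally, to upgrade this scalar conclusion to the multiobjective one, I would apply Proposition~\ref{Pareto solution}: the limiting point $\bar x$, being a local minimizer of the weight-sum function, is a local Pareto optimal solution of (MO-$\ell_0$NP).

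I expect the only delicate points to be bookkeeping rather than substance: verifying that the weighted gradient inherits the Lipschitz constant $L=\max_i L_i$, so that the same stepsize bound $t<1/L$ carries over intact, and confirming the notational identification of $\text{weight sum}$ with $g$ in the rate estimate so that Theorem~\ref{Convergence} transfers verbatim. Once these are settled the result is immediate, since both the convergence with rate and the Pareto optimality are direct consequences of the earlier scalar statements.
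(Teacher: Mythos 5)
Your proposal is correct and takes essentially the same route as the paper: the paper's own (very terse) proof likewise invokes Proposition~\ref{Pareto solution} to reduce Pareto optimality to local minimization of the weight-sum function and then applies Theorem~\ref{Convergence} to the scalarized problem \eqref{weight}. The additional bookkeeping you supply---that $\sum_{i=1}^m\omega_i\nabla f_i$ is Lipschitz with constant at most $\max_i L_i=L$ thanks to $\sum_{i=1}^m\omega_i=1$, so the stepsize bound $t<1/L$ and the identification of the weight sum with a scalar problem of type ($\ell_0$NOP) carry over---is exactly the detail the paper leaves implicit.
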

\begin{proof}
Proposition~\ref{Pareto solution} tells us that we can find a local Pareto solution to problem (MO-{$\ell_0$}NP) as a local minimizer of the weight-sum $\ell_0$ minimization problem \eqref{weight}. Then the claimed results follow from Theorem~\ref{Convergence} applied to \eqref{weight}.
\end{proof}

The last theorem of this section verifies the convergence and the rate estimates for Algorithm~\ref{alg:cap2} to find a local minimizer of the Gerstewitz-scalarized $l^0$ multiobjective optimization problem in \eqref{G-multi} under the general choice of the set $A$ therein. A particular choice of $A$ allows us to find a local Pareto solution to the $\ell_0$ multiobjective problem (MO-{$\ell_0$}NP). The proof of this theorem is based on the properties of the Gerstewitz scalarization function \eqref{gerst} established in Sections~\ref{sec:tammer} and \ref{sec:G-l0}.

\begin{theorem}
Consider the Gerstewitz-scalarized $\ell_0$ optimization problem \eqref{G-multi} in the setting of Algorithm~{\rm\ref{alg:cap2}}. Then this algorithm converges to a local minimizer $\bar x$ of \eqref{G-multi} with the following rate estimate:
\begin{align}\label{rate}
\phi_A \circ f_\text{best}^{(k+s)} - \phi_A \circ\bar f\leq \dfrac{{\rm dist}(x^{(1+s)}, \bar x)^2 + M^2 t^2 k}{2t k},
\end{align}
where $M$ is a Lipschitz modulus of $\phi_A \circ f$, and where $\bar f:=f(\bar x)$. Moreover, the choice of the set $A$ satisfying $A-\R^{m}_+\subset A$ ensures that $\bar x$ is a local Pareto solution to the $\ell_0$ multiobjective optimization problem {\rm(MO-{$\ell_0$}NP)}.
\end{theorem}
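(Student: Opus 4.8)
The plan is to recognise Algorithm~\ref{alg:cap2} as a projected subgradient method for a single convex Lipschitz function and then run the classical subgradient rate analysis, treating the $\ell_0$-term by the same projection mechanism as in the scalar case. Write $h:=\phi_A(f_1,\dots,f_m)$ for the Gerstewitz composition. Two facts serve as stepping stones: $h$ is convex (Proposition~\ref{multi-mon}, using $A-C=A$ with $C=\R^m_+$) and $h$ is Lipschitz on $\R^n$ with some modulus $M$ (the composition-Lipschitz proposition of Section~\ref{sec:G-l0}, built on Theorem~\ref{Lipschitz Property}). Moreover, the choice $k^0=(1,\dots,1)$ together with the key identity \eqref{key} lets us peel the $\ell_0$-norm off additively, so that the objective of \eqref{G-multi} is exactly $h(x)+\|x\|_0$.

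First I would justify that the search direction $d_k:=\tilde I_k J_f(x^{(k)})^\top g^{(k)}$ is a projected subgradient of $h$ at $x^{(k)}$. Since $g^{(k)}\in\partial\phi_A(f(x^{(k)}))$ (the convex subgradient computed in Proposition~\ref{gert-sub}) and the $f_i$ are smooth with Jacobian $J_f$, the convex-composition chain rule gives $J_f(x^{(k)})^\top g^{(k)}\in\partial h(x^{(k)})$; premultiplying by the diagonal $0/1$ matrix $\tilde I_k$ is orthogonal projection onto the current hyperplane, exactly as $\tilde I_k\nabla f$ in Algorithm~\ref{alg:cap}. Next I would handle the $\ell_0$-part as in Theorems~\ref{theorem of local min}, \ref{Monotone}, and \ref{Convergence}: the factor $\tilde I_k$ zeros out coordinates already equal to zero, so the iterates live on a nested family of hyperplanes with $\tilde I_j\subset\tilde I_k$ for $j>k$, whence $\|x_j\|_0$ is nonincreasing and eventually freezes on a fixed subspace $I=I_s$ (say from index $1+s$ on). There $\|x_k\|_0$ is constant and minimizing $h+\|\cdot\|_0$ reduces to minimizing $h$ on $I$, where $\langle d_k,\,y-x^{(k)}\rangle=\langle J_f^\top g^{(k)},\,y-x^{(k)}\rangle$ for $y\in I$, so $d_k$ obeys the subgradient inequality $h(x^{(k)})-h(\bar x)\le\langle d_k,x^{(k)}-\bar x\rangle$ and $\|d_k\|_2\le M$. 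Then I run the standard recursion
\begin{align*}
\|x_{k+1}-\bar x\|_2^2=\|x_k-\bar x\|_2^2-2t\langle d_k,x_k-\bar x\rangle+t^2\|d_k\|_2^2,
\end{align*}
insert the subgradient inequality and the bound $\|d_k\|_2\le M$, telescope over the block of $k$ steps starting at $x^{(1+s)}$, and invoke the $f_{\mathrm{best}}$ bookkeeping in the algorithm to pass from the averaged bound to the best-iterate value, producing exactly \eqref{rate} with its characteristic term $M^2t^2k$.

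For Pareto optimality I would use that $A-\R^m_+\subset A$ forces $\phi_A$ to be monotone with respect to $\R^m_+$ via Proposition~\ref{monotonicity of Gerstewitz}. Writing $F(x)=f(x)+\|x\|_0\,k^0$ and using \eqref{key}, one has $\phi_A(F(x))=h(x)+\|x\|_0$, i.e.\ the scalarized objective of \eqref{G-multi}; if $\bar x$ failed to be locally Pareto, a dominating nearby $x$ with $F(\bar x)-F(x)\in\R^m_+$ would give $\phi_A(F(x))\le\phi_A(F(\bar x))$, contradicting local minimality of $\bar x$ for \eqref{G-multi}, since the $\ell_0$-values coincide on the settled hyperplane as in Theorem~\ref{theorem of local min}.

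The hard part will be the seam between the combinatorial $\ell_0$-projection and the convex subgradient analysis: one must verify cleanly that $\tilde I_k J_f^\top g^{(k)}$ is a bona fide subgradient of $h$ relative to the frozen subspace and that the $\ell_0$-term is exactly constant there, so it drops out of \eqref{rate}. A second subtlety is that the subgradient method delivers only a running-minimum guarantee with the non-vanishing $M^2t^2k$ term rather than monotone descent, so the argument must lean on the $f_{\mathrm{best}}$ update and not on a per-iterate decrease. Lastly, plain monotonicity of $\phi_A$ yields only ``$\le$'' in the Pareto step, so obtaining strict Pareto optimality (as opposed to weak) really needs the strict-monotonicity regime $A-(\R^m_+\setminus\{0\})=\intset A$ from Theorem~\ref{Theorem 2.1}(ii); I would either invoke that refinement or state the conclusion as weak Pareto optimality.
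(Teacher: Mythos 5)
Your proposal follows essentially the same route as the paper's proof: the $\ell_0$-term is frozen on the settled subspace after the dimension-reduction phase stops at iteration $s$, the direction $\tilde I_k J_f(x^{(k)})^\top g^{(k)}$ is treated as a subgradient of the convex Lipschitz composition $\phi_A\circ f$ (with the projection acting as the identity on differences within that subspace), and the classical subgradient recursion with the $f_{\mathrm{best}}$ bookkeeping and the bound $\|\tilde g^{(k)}\|_2\le M$ yields exactly \eqref{rate}, with the Pareto claim deduced from monotonicity of $\phi_A$ under $A-\R^m_+\subset A$. Your closing caveat is well taken and is in fact sharper than the paper itself: the paper's proof also infers Pareto optimality from plain (non-strict) monotonicity alone, so strictly speaking it too only delivers weak Pareto optimality unless one invokes the strict-monotonicity regime $A-(\R^m_+\setminus\{0\})=\intset A$ of Theorem~\ref{Theorem 2.1}(ii) or restricts to sets $A$ for which domination forces a strict decrease of the scalarized value.
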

\begin{proof}
Similarly to proof of Theorem~\ref{Convergence}, suppose that the process of reducing the dimension stops at the $s-$th iteration. Let $\bar x$ be any optimal solution in the smallest hyperplane over the iterations. To simplify the proof, denote $\tilde{g}^{(k)}: = J_f(x^{(k)})^\top g^{(k)}$ and get
\begin{align}\label{sub-ine}
\begin{array}{ll}
\Vert x^{(k+s1)} -\bar x \Vert_2^2 &=\Vert x^{(k+s)} -t \cdot \tilde{I}_{k+s} \tilde{g}^{(k+s)}  -\bar x \Vert_2^2\\
&=\Vert x^{(k+s)} -\bar x \Vert_2^2 -2t\cdot \tilde{I}_{k+s} \tilde{g}^{(k+s)}(x^{(k+s)} -\bar x) + t^2 \Vert \tilde{g}^{(k+s)} \Vert ^2_2\\
&\leq \Vert x^{k+s} -\bar x \Vert_2^2 - 2t(\phi_A \circ f(x^{(k+s)}) -\phi_A \circ\bar f) +t^2 \Vert g^{(k+s)} \Vert _2^2 
\end{array}
\end{align}
with the subgradient inequality coming from the definition
\begin{align*}
t\cdot \tilde{I}_{k+s} \tilde{g}^{(k+s)}(\bar x -x^{(k+s)}) &= t\cdot \tilde{g}^{(k+s)}(\bar x -x^{(k+s)})\\
&\leq \phi_A \circ\bar f -\phi_A \circ f(x^{(k+s)}). 
\end{align*}
Applying \eqref{sub-ine} recursively leads us to the estimate
\begin{align*}
\Vert x^{(k+s+1)} -\bar x \Vert_2^2 \leq  \Vert x^{(s+1)} -\bar x \Vert_2^2 -2 t \sum_{i=1}^k (\phi_A \circ f (x^{(i+s)} -\phi_A \circ\bar f) + t^2 \sum_{i=1}^k \Vert \tilde{g}^{(i+s)} \Vert_2^2.
\end{align*}
The usage of $\Vert x^{(k+s+1)} -\bar x \Vert_2^2 \geq 0$ yields 
\begin{align*}
2 t \sum_{i=1}^k (\phi_A \circ f (x^{(i+s)} -\phi_A \circ\bar f) \leq \Vert x^{(s+1)} -\bar x \Vert_2^2+ t^2 \sum_{i=1}^k \Vert \tilde{g}^{(i+s)} \Vert_2^2,
\end{align*}
which being combined with the inequality
\begin{align*}
t\sum_{i=1}^k (\phi_A \circ f (x^{(i+s)} -\phi_A \circ\bar f \geq  (\sum_{i=1}^k t) (\phi_A \circ f_{best}^{(k+s)} - f(\bar x))
\end{align*}
results in the composition estimate
\begin{align*}
\phi_A \circ f_{best}^{(k+s)} -\phi_A \circ\bar f \leq \dfrac{\Vert x^{(s+1)} -\bar x\Vert_2^2+ t^2 \sum_{i=1}^k \Vert \tilde{g}^{(k+s)}\Vert_2^2 }{2  \sum_{i=1}^k t}.
\end{align*}
Furthermore, the imposed Lipschitz continuity $\Vert \tilde{g}^{(k+s)}\Vert_2\leq M$ ensures that
\begin{align*}
\phi_A \circ f_{best}^{(k+s)} -\phi_A\bar f \leq \dfrac{\Vert x^{(s+1)} -\bar x\Vert_2^2+ t^2 k M^2 }{2 tk},
\end{align*}
and therefore we arrive at the condition
\begin{align*}
\phi_A \circ f_\text{best}^{(k+s)} - \phi_A \circ\bar f\leq \dfrac{\text{dist} (x^{(1+s)}, \bar x)^2 + M^2 t^2 k}{2tk},
\end{align*}
which verifies the convergence of Algorithm~\ref{alg:cap2} to the local minimizer $\bar x$ of \eqref{G-multi} with the claimed rate estimate in \eqref{rate}.
        
Finally, the choice of $A$ such that $A-\R^{m}_+\subset A$ and the monotonicity of the Gerstewitz function with respect to partial order ensure that $\bar x$ is a local Pareto solution to the $\ell_0$ multiobjective optimization problem (MO-{$\ell_0$}NP). This completes the proof.   
\end{proof}
 
\section{Numerical Illustrations}\label{sec:numer}

In this section, we demonstrate the performance of the designed algorithms by considering typical examples. All of the calculations were conducted by using Jupyter Notebook. 

\begin{example}
Let $f(x,y):= x^2 +2y^2 -2x -2xy+3 + \Vert (x,y) \Vert_0$ for $(x,y)\in\R^2$.
\end{example}
It is easy to check that the function in this example has 3 local minimizers: $(2,1), (1,0),\\ (0,0)$. We will use 3 different starting points to compare the  convergences of Algorithm~\ref{alg:cap} to optimal points and to see us how the algorithm performs in these settings, which is graphically illustrated in the figures below.
\begin{figure}[H]
\centering
\includegraphics[width=0.45\textwidth]{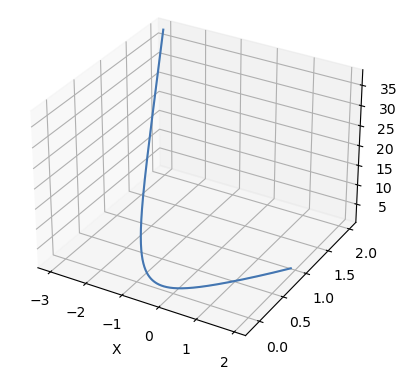}
\caption{Initial point $(-3,2)$}
\label{(-3,2)}
\end{figure}
\begin{figure}[H]
\centering
\includegraphics[width=0.45\textwidth]{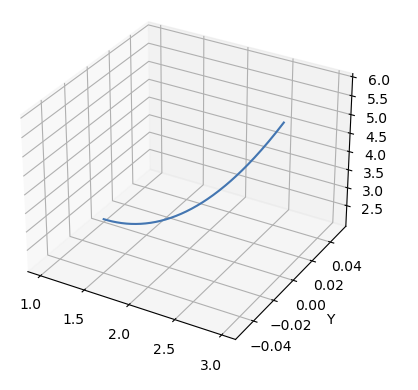}
\caption{Initial point $(3,0)$}
\label{(3,0)}
\end{figure}
\begin{figure}[H]
\centering
\includegraphics[width=0.45\textwidth]{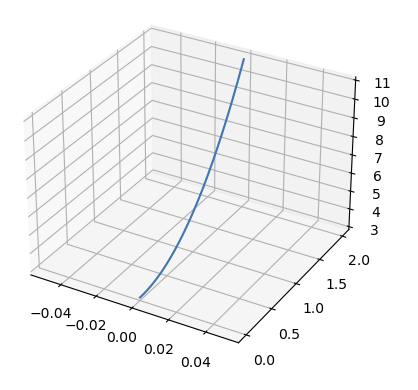}
\caption{Initial point $(0,2)$}
\label{(0,2)}
\end{figure}
As we can see in the the Figure~\ref{(3,0)} and Figure~\ref{(0,2)}, the iterations remain in the hyperplanes and converge to the corresponding local minimizers.

Let us further use an alternative definition of the $\ell_0$-norm function to demonstrate the possibility of the components going to zero. Instead of defining the $\ell_0$-norm by the condition that $x_i$ exactly equal to zero, we modify it by $\vert x_i \vert \leq \epsilon$, with $\epsilon = 10^{-6}$ in the next figure, to see the differences. 
\begin{figure}[H]
\centering
\includegraphics[width=0.45\textwidth]{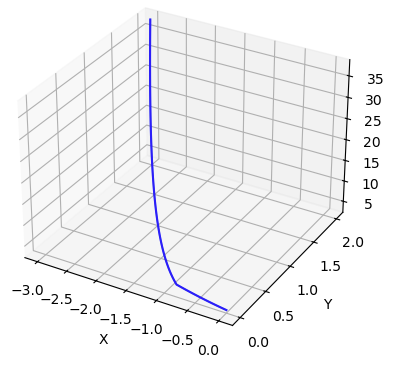}
\caption{Initial point $(-3,2)$ with alternative definition}
\label{(-3,2)(2)}
\end{figure}
Compared to Figure~\ref{(-3,2)}, which is similar to the classical gradient descent method, the modified iterations go into the hyperplane $y=0$ with the subsequent iterations remaining in it. To see the drop of the value and guarantee that Algorithm~\ref{alg:cap} is a descent algorithm, we can check the graph of the function values.
\begin{figure}[H]
\centering
\includegraphics[width=0.7\textwidth]{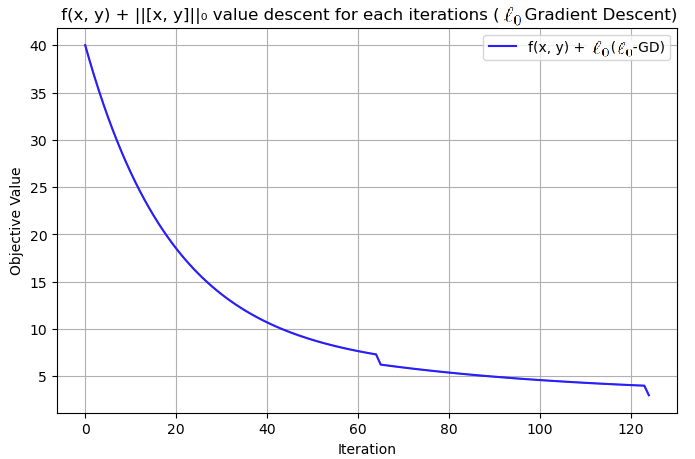}
\caption{Adaptive definition for the $\ell_0$ gradient descent}
\label{function value}
\end{figure}

The next example addresses $\ell_0$ multiobjective optimization problems. 

\begin{example}
Let $f_1(x,y):=(x-1)^2+y^2$ and $f_2(x) := x^2 +(y-2)^2$. 
\end{example}
 Apply the Gerstewitz scalarization, we choose the set $A=\{(y_1,y_2) \in \R^2 \mid y_1+y_2\leq 0\}$ with $k^0=(1,1)$ in Algorithm~\ref{alg:cap2}. Definition \eqref{gerst}
 of Gerstewitz function gives us $\phi_A (f_1,f_2) = \inf \{t \mid (f_1,f_2) -t k^0 \in A\}$, and so $f_1 -t +f_2-t \leq 0$, which yields $t \geq \dfrac{f_1 +f_2}{2}$. Taking the
 infimum of $t$, we get $t= \dfrac{f_1 +f_2}{2}$. Therefore, this scalarization agrees with to the case of the weighted-sum method.
\begin{figure}[H]
\centering
\includegraphics[width=0.7\textwidth]{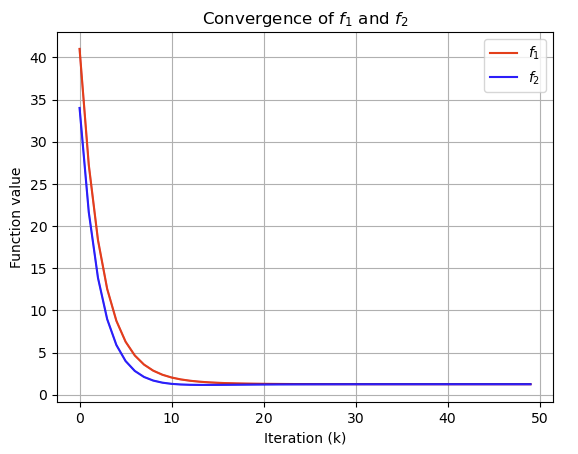}
\caption{Weighted-Sum Method}
\label{weight-sum}
\end{figure}

The last example illustrates the Gerstewitz scalarization method bringing us a nonsmooth resulting function in contrast to the weight-sum approach.

\begin{example}
Let $f_1(x):= (x-2)^2$ and $f_2(x) := (x+1)^2 +1$.
\end{example}

In this example, we choose the set $A:= \{(y_1,y_2) \in \R^2 \mid y_1 \hspace{0.2cm} \text{and}\hspace{0.2cm} y_2 \leq 0 \}$. We have $\phi_A (f_1, f_2) = \inf \{t \mid (f_1,f_2) - tk^0 \in A\}$. Then $t \geq f_1$ and $t \geq f_2$. Thus $t \geq \max \{f_1,f_2\} $ with the infimum of $t$ calculated by $t=\max \{f_1,f_2\}$. 
\begin{figure}[H]
\centering
\includegraphics[width=0.7\textwidth]{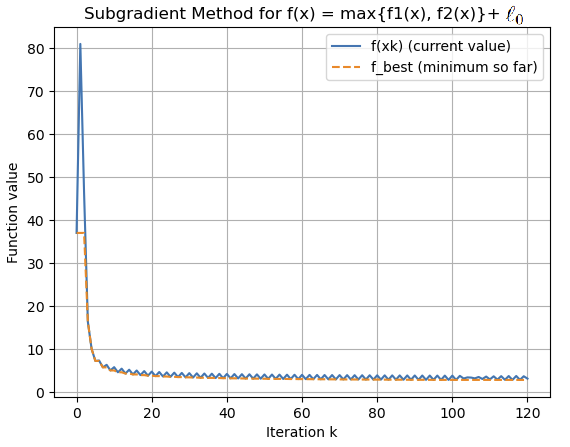}
\caption{Maximum Function}
\label{max}
\end{figure}

\section{Concluding Remarks and Future Research}\label{sec:conc}

This paper concerns single-objective and multiobjective optimization problems involving the $\ell_0$-norm function in their objectives. Problems of this type arise in models of proton therapy, which have been the main practical motivations of our research. Being intrinsically nonsmooth and nonconvex, such problems require the usage advanced tools of variational analysis for their study and applications. The main results obtained in the paper revolve around novel subgradient algorithms to solve both scalar and multiobjective versions based on the limiting subdifferential by Mordukhovich. In this way, we implement two scalarization techniques to deal with multiobjective problems: the weigh-sum approach and mainly Gerstewitz scalarization. The desired convergence properties of the designed algorithms are rigorously verified, and the performance of these algorithms are illustrated by numerical calculations for typical examples. 

While this study introduces a novel framework for $\ell_0$ optimization in proton radiation therapy, we acknowledge that the numerical examples provided are simplified relative the complexity of real clinical scenarios. These examples primarily serve to illustrate the feasibility and potential of proposed methodology. The main direction of our future research to implement the designed algorithms to solving realistic practical models of proton beam therapy as well as related models of cancer research. These tasks will definitely require some adjustments and modifications, which will bring us to new mathematical results.\\[1ex]
{\bf Acknowledgements}. Research of Xuanfeng Ding was partly supported by NIH under grant R01CA301448. Research of Boris Mordukhovich was partly supported by the US National Science Foundation under grant DMS-2204519, by the Australian Research Council under Discovery Project DP-190100555, and by Project 111 of China under grant D21024. Research of Anh Vu Nguyen was partly supported by the US National Science Foundation under grant DMS-2204519.

\bibliographystyle{unsrtnat}   
\bibliography{references}
\end{document}